%
%
%
%
%
%
%
\documentclass[
 reprint,
 amsmath,amssymb,
 aps,
]{revtex4-2}

\usepackage{graphicx}
\usepackage{dcolumn}
\usepackage{bm}



\usepackage{scalerel}
\usepackage{amsthm}
\usepackage{tikz}
\usetikzlibrary{svg.path}

\definecolor{orcidlogocol}{HTML}{A6CE39}
\tikzset{
  orcidlogo/.pic={
    \fill[orcidlogocol] svg{M256,128c0,70.7-57.3,128-128,128C57.3,256,0,198.7,0,128C0,57.3,57.3,0,128,0C198.7,0,256,57.3,256,128z};
    \fill[white] svg{M86.3,186.2H70.9V79.1h15.4v48.4V186.2z}
                 svg{M108.9,79.1h41.6c39.6,0,57,28.3,57,53.6c0,27.5-21.5,53.6-56.8,53.6h-41.8V79.1z M124.3,172.4h24.5c34.9,0,42.9-26.5,42.9-39.7c0-21.5-13.7-39.7-43.7-39.7h-23.7V172.4z}
                 svg{M88.7,56.8c0,5.5-4.5,10.1-10.1,10.1c-5.6,0-10.1-4.6-10.1-10.1c0-5.6,4.5-10.1,10.1-10.1C84.2,46.7,88.7,51.3,88.7,56.8z};
  }
}

\newcommand\orcidicon[1]{\href{https://orcid.org/#1}{\mbox{\scalerel*{
\begin{tikzpicture}[yscale=-1,transform shape]
\pic{orcidlogo};
\end{tikzpicture}
}{|}}}}


\usepackage{bm} 
\newcommand{\bx}{{\bf x}}

\newcommand{\bX}{{\bf X}}

\newcommand{\bA}{{\bf A}}

\newcommand{\bB}{{\bf B}}
\newcommand{\bC}{{\bf C}}
\newcommand{\bCxx}{{\bC_{\bx\bx}}}
\newcommand{\bI}{{\bf I}}
\newcommand{\bJ}{{\bf J}}
\newcommand{\Jf}{{\bJ_f}}
\newcommand{\bK}{{\bf K}}
\newcommand{\Kobs}{{K_\text{obs}}}
\newcommand{\bL}{{\bf L}}

\newcommand{\bQ}{{\bf Q}}
\newcommand{\bW}{{\bf W}}
\newcommand{\bmxi}{{\bm{\xi}}}
\newcommand{\bmeta}{{\bm{\eta}}}
\newcommand{\bmtau}{{\bm{\tau}}}

\newcommand{\Alim}{{A_{\text{LIM}}}}

\newcommand{\Qlim}{{Q_{\text{LIM}}}}

\newcommand{\Acglim}{{A_{\text{C-LIM}}}}
\newcommand{\Bcglim}{{B_{\text{C-LIM}}}}
\newcommand{\Qcglim}{{Q_{\text{C-LIM}}}}

\newcommand{\Admd}{{A_{\text{DMD}}}}
\newcommand{\Ldmd}{{L_{\text{DMD}}}}
\newcommand{\taucglim}{{\tau_{\text{C-LIM}}}}
\newcommand{\LFP}{\textbf{L}_{\textit{FP}}}
\newcommand{\pst}{P_{\textit{st}}}

\usepackage{algorithm,algpseudocode}
\algnewcommand{\To}{\textbf{To }}
\algnewcommand\Input{\item[\textbf{Input:}]}%
\algnewcommand\Output{\item[\textbf{Output:}]}%

\usepackage{appendix}

\usepackage{lipsum}
\usepackage{amsfonts}
\usepackage{graphicx}
\usepackage{epstopdf}
\ifpdf
  \DeclareGraphicsExtensions{.eps,.pdf,.png,.jpg}
\else
  \DeclareGraphicsExtensions{.eps}
\fi

\newcommand{\R}{\mathbb{R}}


\newtheorem{theorem}{Theorem}
\newtheorem{corollary}{Corollary}[theorem]

\usepackage{amsmath} 
\usepackage{mathtools} 
\usepackage{makecell}
\usepackage{multirow}
\usepackage{booktabs}
\usepackage{afterpage}
\usepackage{graphicx} 
\usepackage{array}

\usepackage{tikz}
\usetikzlibrary{arrows.meta}
\tikzset{mynode/.style={draw, very thick, circle, minimum size=0.4cm}, myarrow/.style={very thick}}

\usepackage{graphicx}

\DeclareMathOperator*{\argmin}{arg\,min} 

\setlength\parindent{24pt}



\begin{document}

\preprint{APS/123-QED}

\title{Colored-LIM: A Data-Driven Method for Studying \\ Dynamical Systems with Temporally Correlated Stochasticity}

\author{Justin Lien}
 \email{(Corresponding author) lien.justin.t8@dc.tohoku.ac.jp}
\affiliation{%
 Mathematical Institute, Tohoku University
}%

\author{Yan-Ning Kuo}
\affiliation{
 Department of Earth and Atmospheric Sciences, Cornell University 
}%

\author{Hiroyasu Ando}
\affiliation{%
 Advanced Institute for Materials Research, Tohoku University 
\\ }%
\affiliation{%
 Mathematical Institute, Tohoku University
}%

\author{Shoichiro Kido}
\affiliation{
 Japan Agency for Marine-Earth Science and Technology 
}%


\date{\today}

\begin{abstract}
In real-world problems, environmental noise is often idealized as Gaussian white noise, despite potential temporal dependencies. 
The Linear Inverse Model (LIM) is a class of data-driven methods that extract dynamic and stochastic information from finite time-series data of complex systems. 
In this study, we introduce a new variant of LIM, called Colored-LIM, which models stochasticity using Ornstein-Uhlenbeck colored noise. 
Despite the non-trivial correlation between observable and colored noise, we show that Colored-LIM unveils the desired information merely from the correlation function of the observable. 
Therefore, this approach not only accounts for the memory effects of environmental noise, traditionally represented by time-uncorrelated white noise in the Classical LIM framework, but does so using the same observation dataset without requiring additional data. 
Furthermore, we show that Colored-LIM does not reduce to Classical LIM in the white noise limit, underscoring the importance of temporal dependencies in stochastic systems.

In this paper, we rigorously develop the Colored-LIM, explore its connections with the Classical LIM and Dynamic Mode Decomposition, and validate its effectiveness on both ideal linear and nonlinear systems. 
In addition, we illustrate the potential applications and implications of Colored-LIM for real-world problems, including the El Ni\~{n}o-Southern Oscillation and the electricity network of Tohoku University.
\end{abstract}

\maketitle


\section{Introduction} \label{Chap:Intro}

The stochastic differential equation (SDE) is a mathematical tool that incorporates the deterministic dynamics and random forcing which models the environmental noise of the systems. 
Widely applied to thermal physics, financial mathematics, and climate sciences in which the governing systems are influenced by uncertainty or random fluctuations, the development of SDEs has led to a significant influence on our understanding of complex systems \cite{Oksendal1987,Penland1989,Seifert2012,sarkka_solin_2019}. 
The Gaussian white noise often serves as an idealized representation of the random forcing due to its statistical properties and mathematical simplicity (e.g. uncorrelated in time and independent of frequency).
One of the simplest SDEs is an Ornstein-Uhlenbeck (OU) process, a linear mean-reverting process in which the deterministic dynamics and random forcing do not directly interact.
It has been extensively studied by both mathematicians and applied scientists, giving rise to several models and algorithms including the classical linear inverse model (LIM).

The Classical LIM is an empirical method widely used in climate sciences and geophysics that extracts the system dynamics from the observation by approximating the underlying complex system with an OU process (or a linear Markov model) \cite{Kwasniok2022,Penland1996,Penland1993}.
More precisely, in the Classical LIM framework, the data is collected from a system of the form 
\begin{align}\label{Eq:GeneralEq}
    \frac{d}{dt}\bx = F(\bx,t,\mu, \textit{noise}),
\end{align}
where $\bx(t)$ is the system state at time $t$, $F$ describes the stochastic dynamical system, and $\mu$ denotes the parameters.
The system $F$ is in general mathematical intractable, so an equation-free perspective is taken by constructing a white-noise driven linear system (see Eq.~(\ref{Eq:WienerProcess})) 
that estimates Eq.~(\ref{Eq:GeneralEq}), allowing the study of the time evolution of the departure of the equilibrium point. 
For this linearized system, the correlation function is an exponential function whose exponent is the linear dynamics, which can be extracted simply by the covariance and a lag-$\rho$ autocovariance for any $\rho > 0$. 
Then the diffusive behavior simply follows from the fluctuation-dissipation relation (FDR) \cite{Penland1989}.
However, in the application, the estimated dynamics may strongly depend on the choice of the lag $\rho$, indicating the potential deviation of approximating the system as a linear Markov model due to non-linearity or the non-white noise \cite{Penland1989}.
This observation has driven the development of variants of LIMs and other new approaches to the inverse problem \cite{Martinez2017,Shin2021}.

Since the assumptions of the Classical LIM might not work in practice, given the potential existence of non-linearity and non-Gaussianity in the system, many approaches have been proposed to model the behavior of complex stochastic dynamical systems. 
Examples include the Kramers-Moyal coefficient method, the Mori-Zwanzig formalism, and the multilevel regression modeling \cite{Barrie1986,Cui2018,Hassanibesheli2020,KONDRASHOV2015,Kravtsov2005,mccullagh2019,Niemann2008,Schmitt2006}. However, each of these methods has its limitations.
For example, the Kramers-Moyal coefficient method makes the white-noise assumption and therefore, performs poorly for non-Markovian systems \cite{Cui2018,Hassanibesheli2020}. 
The Mori-Zwanzig method projects the complex system to a lower dimensional space, obtaining exact equations for the observables only by putting the unresolved variables into a temporal kernel and residuals \cite{KONDRASHOV2015,Niemann2008}. 
The multilevel regression method as a generalization of multiple linear regression and the multiple polynomial regression, iteratively models the residuals at the current level by the variables at this level and all preceding levels; additional levels are introduced until the residual at the last level satisfies a white-noise condition \cite{Kravtsov2005}.
These methods effectively reproduce the probability density function of the underlying stochastic processes whenever applicable, and can even reconstruct the system parameters under the white-noise condition. However, for a non-Gaussian noise-driven stochastic process, they are limited to reproducing the probability density function.
Therefore, our objective is to propose a novel method capable of estimating the linear dynamics under non-Gaussian noise conditions.

Among the class of non-Gaussian noises, the normalized colored noise $\bmeta$ is one of the simplest time-correlated noises, exhibiting an exponential correlation function of the form 
\begin{equation}\label{Eq:Defining-Colored-Noise}
    \langle \bmeta(t+s) \bmeta(t)^T \rangle = \frac{1}{2\bmtau} e^{ \frac{-\left\vert s \right\vert}{\bmtau} }\bI
\end{equation} 
where $\bI$ denotes the identity matrix, for some constant $\bmtau>0$ called the noise correlation time \cite{KlosekDygas1988}. 
The colored noise $\bmeta$ models the environmental stochasticity with temporal dependency whose $e$-folding time is represented by $\bmtau$.
However, Eq.~(\ref{Eq:Defining-Colored-Noise}) along with the zero mean condition does not uniquely define the colored noise, as the higher-order moment is left unconstrained \cite{Gradziuk2022}. 
Several approaches to realizing the colored noise have been proposed, and they may have significantly different properties \cite{Benedetti2014,FALSONE2018,Gradziuk2022,Zhivomirov2018}.
As we focus on the continuous-time stochastic processes, the most natural choice is the OU colored noise, which can be realized as the steady state of the following SDE
\[ \frac{d}{dt} \bmeta = \frac{-1}{\bmtau}\bmeta + \frac{1}{\bmtau}\bmxi, \]
where $\bmxi$ is the $n$-dimensional normalized Gaussian random vector.

In this article, we propose a new variant of LIMs for stochastic processes with temporally correlated random forcing, called the Colored-LIM, which approximates the non-linear complex system (\ref{Eq:GeneralEq})
with an OU colored-noise driven linear system (\ref{Eq:CG-StochasticProcess}). 
For the linearized system, the correlation function of the observables is no longer an exponential function due to the non-trivial correlation between the state variable $\bx$ and the colored-noise random forcing $\bmeta$.
Nevertheless, for a given noise correlation time $\bmtau$, we utilize the correlation function of $\bx$ only and its derivatives up to the third order to formulate the inverse problem as solving a system of linear equations.
Also, we propose a minimization algorithm to determine $\bmtau$ uniquely whenever $\bmtau$ is not predetermined. 
In practice, the Colored-LIM algorithm amounts to computing the correlation function near the origin, applying numerical differentiation such as a finite difference scheme to compute the derivatives, and then solving a system of linear equations and a potential $1$-dimensional minimization problem, leading to its computational efficiency and numerical stability. 

Dynamic mode decomposition (DMD) is a deterministic counterpart to LIMs, providing a method to estimate the linear dynamics of unknown systems and extract their dominant dynamic modes. 
Over time, DMD has been extended into several variants and has found applications across a wide range of sciences and engineering \cite{Kutz2016,Schmid2022}.
Notably, one of the fundamental DMD variants, projected DMD, has been shown to be equivalent to Classical LIM in terms of its dynamical interpretation \cite{Tu2014}.
Therefore, the Colored-LIM can be viewed as an extension of DMD that operates in the presence of colored noise, offering a powerful tool for dynamical mode discovery.

The structure of the article is as follows:
In sections \ref{Chap:LIM}, \ref{Chap:Colored-LIM-Math}, and \ref{Chap:Colored-LIM-algorithm}, we focus on the linear stochastic processes, review the mathematical background of the Classical LIM, and develop the Colored-LIM by using the Fokker-Planck equation and operator. 
In sections \ref{Chap:Application-viewpoint} and \ref{Chap:Conn-with-DMD}, we discuss the Colored-LIM from an application perspective and explore its connection with DMD. 
In sections \ref{Chap:NumExp} and \ref{Chap:Real-world}, we demonstrate the effectiveness and applications of Colored-LIM to both ideal and real-world problems. 

Before moving to the next section, we briefly explain the convention of our notation. The vector is assumed to be a column vector without explicitly stated. For the theoretical development, quantities such as the system parameters, random variables, and their statistics, are denoted in bold.
When referring to time-series observation data, we mean a discrete-time sequence of vectors with an equal sampling interval represented by $\Delta t$.
We use the regular font for observation data and the model output.

\section{A Review of the Classical LIM} \label{Chap:LIM}

In this section, we review the mathematical framework of Classical LIM \cite{Penland1989,Penland1996,Penland1994}. The Classical LIM consists of first finding the dynamical matrix and then the diffusion matrix through the FDR. 
To find the dynamical matrix, several methods have been proposed \cite{Kravtsov2005,Penland1994}. 
We focus on the method of transition matrix, which reveals the fact that linear dynamics is characterized by the correlation function. 
In addition, we provide a proof of FDR, though elementary, by using the adjoing Fokker-Planck operator. 
These will inspire the development of Colored-LIM in the next section.

\subsection{The method of transition function}\label{Chap:TransFunc}

In the context of SDEs, the transition matrix (sometimes called the propagator, the Green's function) is an important concept in the analysis of stochastic processes, especially in the study of time-evolution of probability density \cite{sarkka_solin_2019}. 
It provides a systematic method for studying linear equations by describing the probability distribution of the solution of the SDE at a specific time, given an initial condition. 
In the Classical LIM, it characterizes the correlation function, and provides the ground for estimating the dynamics from data \cite{Penland1994}.

We start with the setup. 
Suppose that the $n$-dimensional stochastic process $\bx$ is governed by an OU process of the form,
\begin{equation}\label{Eq:WienerProcess}
    \frac{d}{dt}\bx = \bA\bx + \sqrt{2\bQ} \bmxi,
\end{equation}
where $\bA \in \R^{n \times n}$ is the dynamical matrix, $\bQ  \in \R^{n \times n}$ is the diffusion matrix, and $\bmxi \in \R^n$ is the normalized Gaussian random vector with zero mean and
\begin{equation}\label{Eq:NormalizedGaussian}
    \langle \bmxi(t+s)\bmxi(t)^T \rangle = \delta(s)\bI,
\end{equation} 
where the bracket and $\delta(s)$ denote the expectation and the Dirac delta function, respectively.
The diffusion matrix $\bQ$ is positively definite (i.e., symmetric and all eigenvalues being strictly positive), and to have a steady state, each eigenvalue of the dynamical matrix $\bA$ is assumed to have a negative real part. 
From now on, we assume the stationarity throughout this article.

\begin{theorem}
    Let the correlation function be given by $\bK(s) \coloneqq \langle \bx(\cdot+s) \bx(\cdot)^T \rangle$. For the linear stochastic process satisfying Eq.~(\ref{Eq:WienerProcess}), the dynamical matrix $\bA$ satisfies
    \begin{equation}
    \label{Eq:GreenFunc}
        \bA = \frac{1}{s} \log\big( \bK(s) \bK(0)^{-1} \big)
    \end{equation}
    for any $s>0$, where the $\log$ denotes the matrix logarithm. In particular, the correlation function is the exponential function of the form
    \begin{align} \label{Eq:Kw}
        \bK(s) = e^{\bA s} \bK(0) = e^{\bA s} \bCxx,
    \end{align}
    for $s \ge 0$, where $\bCxx \coloneqq \langle \bx(\cdot) \bx(\cdot)^T \rangle$ denotes the covariance of $\bx$.
\end{theorem}

\begin{proof}
    The transition matrix $\Psi(t,t')$ is simply defined by
    \begin{equation} \label{Eq:TransMat}
        \frac{\partial}{\partial t} \Psi(t,t') = \bA \Psi(t,t'),
    \end{equation}
    with initial condition $\Psi(t,t) = \bI$. Hence, the transition matrix can be explicitly written as 
    \[ \Psi(t,t') = e^{\bA(t-t')}. \]
    For a time-lag $s > 0$, by multiplying the transition matrix to Eq.~(\ref{Eq:WienerProcess}), rearranging and integrating, $\bx(t+s)$ can be expressed in terms of the transition matrix by 
    \[ \bx(t+s) = \Psi(t+s,t) \bx(t) + \int_{0}^s \Psi(t+t',t) \bmxi(t+t') dt'. \]
    We have
    \begin{widetext}
        \begin{align*}
        \langle \bx(t+s) \bx(t)^T \rangle &= \Psi(t+s,t) \langle \bx(t) \bx(t)^T \rangle + \int_0^s \Psi(t+t',t) \langle \bmxi(t+t')\bx(t)^T \rangle dt' \\
        &= e^{\bA s} \langle \bx(t) \bx(t)^T \rangle + \int_0^s \Psi(t+t',t) \langle \bmxi(t+t')\bx(t)^T \rangle dt' \\
        &= e^{\bA s} \langle \bx(t) \bx(t)^T \rangle,
    \end{align*}
    \end{widetext}
    since the Gaussian noise $\bmxi$ is independent of the stochastic process $\bx(t)$ and has zero mean. 
    Then in the steady state, a simple rearrangement completes the proof.
\end{proof}

\subsection{The fluctuation-dissipation relation (FDR)}
The FDR is a consequence of the Fokker-Planck equation. 
By characterizing the time evolution of probability distribution, the Fokker-Planck equation is used for analyzing and predicting the behavior of stochastic processes. 
The proof can be found in the standard SDE textbooks \cite{sarkka_solin_2019}.

\begin{theorem}[Classical Fokker-Plank equation]
    With the notation as above, the probability distribution $P(x,t) = \langle \delta(\bx(t)-x)\rangle$ of the state variable $\bx$ satisfies
    \begin{align} \label{Eq:FP-eq}
        \frac{\partial}{\partial t} &P(x,t) \nonumber \\
        &= - \sum_{j,k} \bA_{jk} \frac{\partial}{\partial x_j} x_k P(x,t) + \sum_{j,k} \bQ_{jk} \frac{\partial^2}{\partial x_j \partial x_k} P(x,t) \nonumber \\ 
        &= \LFP P(x,t),
    \end{align}
    where $\LFP$ is the Fokker-Planck operator. 
\end{theorem}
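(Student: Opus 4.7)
The plan is to derive the Fokker-Planck equation directly from the definition $P(x,t) = \langle \delta(\bx(t)-x)\rangle$ by formally differentiating in $t$, substituting the SDE, and handling the noise term via the Furutsu--Novikov formula. I would not resort to a discrete time-step/Chapman--Kolmogorov argument, because the same approach (delta-function plus Novikov) generalizes cleanly to the colored-noise setting in \cref{Chap: FP-Eq-Derivation}, which is the whole reason this proof is being reproduced.

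First I would bring the time derivative inside the expectation and use the chain rule together with the identity $\partial_t \delta(\bx(t)-x) = -\sum_j \dot{\bx}_j \, \partial_{x_j} \delta(\bx(t)-x)$, where the minus sign and switch from differentiation with respect to $\bx_j$ to differentiation with respect to $x_j$ come from the fact that the delta function depends only on the difference $\bx(t)-x$. Substituting \cref{Eq:WienerProcess} then splits $\partial_t P$ into a drift contribution and a noise contribution:
\begin{equation*}
\frac{\partial}{\partial t} P(x,t) = -\sum_{j,k} \bA_{jk}\,\frac{\partial}{\partial x_j}\bigl\langle \bx_k(t)\,\delta(\bx(t)-x)\bigr\rangle - \sum_j \frac{\partial}{\partial x_j}\bigl\langle (\sqrt{2\bQ}\,\bmxi(t))_j\, \delta(\bx(t)-x)\bigr\rangle.
\end{equation*}
The first term is handled by the sifting property $\bx_k(t)\,\delta(\bx(t)-x) = x_k\,\delta(\bx(t)-x)$ inside the expectation, which immediately yields the drift part of \cref{Eq:FP-eq}.

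The main obstacle is the noise term, since $\bmxi(t)$ and $\bx(t)$ are not independent. The key step is to invoke the Furutsu--Novikov identity
\begin{equation*}
\bigl\langle \bmxi_l(t)\, F[\bmxi]\bigr\rangle = \int \bigl\langle \bmxi_l(t)\,\bmxi_m(s)\bigr\rangle \Bigl\langle \frac{\delta F[\bmxi]}{\delta \bmxi_m(s)}\Bigr\rangle ds,
\end{equation*}
with $F[\bmxi] = \delta(\bx(t)-x)$. Because of \cref{Eq:NormalizedGaussian} the integral collapses to the equal-time contribution, and the functional derivative $\delta \bx_k(t)/\delta \bmxi_m(s)$ evaluated as $s\uparrow t$ produces the response of $\bx$ to a noise kick, namely $(\sqrt{2\bQ})_{km}$ times the customary factor of one-half from the delta function at the endpoint. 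I would then simplify $\sqrt{2\bQ}\,(\sqrt{2\bQ})^T = 2\bQ$ and apply the chain rule once more, converting the functional derivative into $-\partial_{x_k}\delta(\bx(t)-x)$, which gives the second-order diffusion term with the correct coefficient $\bQ_{jk}$.

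Finally I would collect the drift and diffusion contributions, recognize the resulting operator as $\LFP$, and conclude \cref{Eq:FP-eq}. The subtle point I would emphasize is the one-half factor from integrating $\delta(t-s)$ over the half-line $s \le t$, which is precisely what converts $2\bQ$ into $\bQ$ and makes the derivation consistent with the Itô interpretation of \cref{Eq:WienerProcess}; this is the calculation I expect to require the most care, and it is also the step that will need generalization when $\bmxi$ is replaced by an OU colored-noise with a smooth, non-delta correlation kernel.
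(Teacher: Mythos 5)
Your proposal follows essentially the same route as the paper's own derivation in the appendix: the stochastic Liouville equation for $\partial_t \langle \delta(\bx(t)-x)\rangle$, the sifting property for the drift term, the Furutsu--Donsker--Novikov formula for the noise average, the response function $\delta\bx/\delta\bmxi = e^{\bA(t-s)}\sqrt{2\bQ}$ reducing to $\sqrt{2\bQ}$ at $s=t$, and the factor of one-half from integrating $\delta(t-s)$ over the half-line, which combines with $\sqrt{2\bQ}\,(\sqrt{2\bQ})^T = 2\bQ$ to give the diffusion coefficient $\bQ_{jk}$. The argument is correct and correctly identifies the endpoint one-half factor as the delicate step that must be generalized in the colored-noise case.
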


Now, we are in the position to prove the FDR by using the adjoint Fokker-Planck operator. 

\begin{corollary}[The classical fluctuation-dissipation relation] \label{Cor:FD-relation}
The covariance matrix $\bC_{\bx\bx}$ satisfies
    \begin{equation} \label{Eq:FD-relation}
        0 = \bA\bC_{\bx\bx} + \bC_{\bx\bx} \bA^T + 2 \bQ.
    \end{equation}
\end{corollary}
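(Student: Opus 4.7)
The plan is to exploit the fact that the target identity \cref{Eq:FD-relation} constrains only the second moments of $\bx$, so it should follow by taking the appropriate second-moment projection of the steady-state Fokker-Planck equation $\LFP\pst=0$. Concretely, I would multiply $\LFP\pst=0$ by $x_i x_k$, integrate over $x\in\R^n$, and then read the resulting scalar equations as the $(i,k)$ components of the desired matrix identity.

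The computation itself is just integration by parts, which is legitimate because, for an OU process with $\bA$ stable and $\bQ$ positive definite, $\pst$ is Gaussian and therefore decays faster than any polynomial, killing all boundary terms. For the drift term I would integrate by parts once to move $\partial_{x_j}$ onto $x_i x_k$; since $\partial_{x_j}(x_i x_k)=\delta_{ij}x_k+\delta_{kj}x_i$, the remaining integrals $\int x_a x_b\,\pst\,dx$ are exactly the entries of $\bC_{\bx\bx}$. For the diffusion term I would integrate by parts twice, which gives $\partial_{x_j}\partial_{x_l}(x_i x_k)=\delta_{il}\delta_{kj}+\delta_{kl}\delta_{ij}$, and after contracting against $\bQ_{jl}$ and invoking symmetry of $\bQ$ this collapses to the constant $2\bQ_{ik}$.

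The only step that requires genuine care is repackaging the resulting index sums into matrix notation. The drift contribution appears naturally as $\sum_l\bA_{il}(\bC_{\bx\bx})_{kl}+\sum_l\bA_{kl}(\bC_{\bx\bx})_{il}$, and I would use the symmetry of $\bC_{\bx\bx}$ (a definitional property of $\langle\bx\bx^T\rangle$) to rewrite these as $(\bA\bC_{\bx\bx})_{ik}$ and $(\bC_{\bx\bx}\bA^T)_{ik}$, respectively. This is the one spot where a transpose error could slip in, and it is also the reason the Lyapunov expression $\bA\bC_{\bx\bx}+\bC_{\bx\bx}\bA^T$ emerges in its symmetrised form. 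An alternative derivation would apply It\^o's formula to $\bx\bx^T$, take expectations, and use stationarity $\tfrac{d}{dt}\bC_{\bx\bx}=0$; this is merely the probabilistic dual of the PDE argument above, and I would present the Fokker-Planck version to stay consistent with the proof style of the preceding theorem.
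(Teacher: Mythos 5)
Your proposal is correct and follows essentially the same route as the paper: the paper's use of the adjoint operator $\LFP^*$ applied to $x_p x_q$ is precisely your "multiply by $x_i x_k$ and integrate by parts" computation, with the same drift and diffusion contributions and the same symmetrisation of $\bC_{\bx\bx}$. Your extra remark justifying the vanishing of boundary terms via Gaussian decay of $\pst$ is a detail the paper leaves implicit.
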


\begin{proof}
    The adjoint Fokker-Planck operator $\LFP^*$ is
    \begin{align*}
        \LFP^* = \sum_{j,k} \bA_{jk} x_k \frac{\partial}{\partial x_j} + \sum_{j,k} \bQ_{jk} \frac{\partial^2}{\partial x_j \partial x_k}.
    \end{align*}
    Let $\pst(x)$ be the stationary probability distribution (i.e., $\LFP \pst = 0$).
    Then we compute for each $p$ and $q$,
    \begin{align*}
        0 &= \int_{\R^n} x_p x_q \LFP \pst(x) \,dx\\
        &= \int_{\R^n} \pst(x) \LFP^* x_p x_q \,dx \\
        &= \int_{\R^n} \pst(x) \big( \sum_{j,k} \bA_{jk} x_k \delta_{jp} x_q + \sum_{j,k} \bA_{jk} x_k \delta_{jq} x_p \\ & \indent + \sum_{j,k} \bQ_{jk} \delta_{jp}\delta_{kq} \big) \,dx \\
        &= \sum_{k} \bA_{pk}\langle \bx_k \bx_q \rangle + \sum_{k} \bA_{jk}\langle \bx_k \bx_p \rangle + 2\bQ_{pq},
    \end{align*}
    where the $\delta_{jq}$ denotes the Kronecker delta.
    This is Eq.~(\ref{Eq:FD-relation}) in matrix notation. 
\end{proof}

The FDR links the linear deterministic dynamics $\bA$ and the diffusion matrix $\bQ$, and serves as one of the fundamental pieces of the Classical LIM.


\subsection{The algorithm of the Classical LIM}

In practice, given a time-series observation data $\{x(t): t = 0, \Delta t, \dots, N\Delta t\} \subset \R^n$, the Classical LIM returns the dynamical and diffusion matrices from the data based on (\ref{Eq:GreenFunc}) and (\ref{Eq:FD-relation}). 
Detrending $\{x(t)\}$ may be necessary prior to solving the system dynamics through LIMs.
However, the choice of detrend algorithm depends on applications and is beyond the scope of this article, so we do not dive into the details.
To compute the observed correlation function $\Kobs$, we use the approximation, for $s = k\Delta t$,
\begin{equation} \label{Eq:DiscCorrFuc}
    \Kobs(s) = \overline{x(t+s)x(t)^T} = \frac{\sum_{t=0}^{(N-k)\Delta t} x(t+s)x(t)^T}{N-k+1},
\end{equation} 
where the bar denotes the time average, as in the steady state, the ensemble average is equal to the time average. The Classical LIM is summarized in Alg.~\ref{Alg:LIM}.

\begin{algorithm}[H]
\caption{The Classical LIM} \label{Alg:LIM}
\begin{algorithmic}[1]
\Input{A time-series data $\{x(t)\}$, timestep $\Delta t$ and a lag $k$}
\Output{$\Alim$ \text{and} $\Qlim$}
    \State Detrend (Optional).
    \State Let the time-lag $\rho = k\Delta t$.
    \State Compute the correlation $\Kobs(0)$ and $\Kobs(\rho)$.
    \State Compute $\Alim$ by Eq.~(\ref{Eq:GreenFunc}).
    \State Compute $\Qlim$ by Eq.~(\ref{Eq:FD-relation}).
\end{algorithmic}
\end{algorithm}


\section{The Mathematics of Colored-LIM} \label{Chap:Colored-LIM}

In this section, we develop a novel LIM that models the environmental stochasticity with an OU colored noise. 
Based on the experience of the Classical LIM, if the noise correlation time $\bmtau$ is known, a natural approach is to find the dynamical matrix first and then the diffusion matrix. 
One of the main difficulties is that the white noise cannot be simply replaced by colored noise since the observable $\bx$ and the colored noise $\bmeta$ are correlated.
Therefore, we regard the colored noise as a stochastic process, and study both $\bx$ and $\bmeta$ at the same time in section \ref{Chap:Colored-LIM-Math}.
We leave to section \ref{Chap:Colored-LIM-algorithm} the discussion of the case where $\bmtau$ is not known a prior. 

\subsection{Mathematical background} \label{Chap:Colored-LIM-Math}

Similar to the case of Classical LIM, we study the correlation function of a colored-noise-driven linear system and the Fokker-Planck equation, and then use the adjoint Fokker-Planck operator to provide a recipe to estimate the linear dynamics and random forcing.

Again, we start with the setup. Let $\bx$ be the $n$-dimensional stochastic process satisfying
\begin{equation} \label{Eq:CG-StochasticProcess}
    \frac{d}{dt}\bx = \bA\bx + \sqrt{2\bQ}\bmeta,
\end{equation}
where $\bA \in \R^{n \times n}$ is the deterministic dynamical matrix and $\bQ \in \R^{n \times n}$ is the diffusion matrix; the normalized colored noise process $\bmeta$ is the an OU process satisfying
\begin{equation} \label{Eq:CG-Process}
    \frac{d}{dt}\bmeta = -\frac{1}{\bmtau} \bmeta + \frac{1}{\bmtau} \bmxi,
\end{equation} 
where $\bmtau >0$ is the noise correlation time, and $\bmxi$ is the $n$-dimensional normalized Gaussian vector satisfying Eq.~(\ref{Eq:NormalizedGaussian}).

The linear systems (\ref{Eq:CG-StochasticProcess}) and (\ref{Eq:CG-Process}) can be rearranged into the augmented stochastic equation
\begin{equation} \label{Eq:CG-and-SP-combined}
    \frac{d}{dt}
    \begin{bmatrix}
        \bx \\
        \bmeta
    \end{bmatrix}
    = 
    \begin{bmatrix}
        \bA &  \sqrt{2\bQ} \\
        0 & -\frac{1}{\bmtau}\bI 
    \end{bmatrix}
    \begin{bmatrix}
        \bx \\
        \bmeta
    \end{bmatrix}
    +
    \begin{bmatrix}
        0 & 0 \\
        0 & \frac{1}{\bmtau}\bI 
    \end{bmatrix}
    \begin{bmatrix}
        0 \\
        \bmxi 
    \end{bmatrix}
    .
\end{equation}
Therefore, the covariance matrices for the augmented system (\ref{Eq:CG-and-SP-combined}) satisfy
\begin{equation} \label{Eq:Cov-x-eta}
    \bC_{\bmeta\bx} \coloneqq \langle \bmeta(\cdot)\bx(\cdot)^T \rangle = \sqrt{\frac{\bQ}{2}}\bB^T
\end{equation}
and 
\[ \bC_{\bmeta\bmeta} \coloneqq \langle \bmeta(\cdot)\bmeta(\cdot)^T \rangle = \frac{1}{2\bmtau}\bI, \]
where $\bB = (\bI - \bmtau\bA)^{-1}$ is the well-defined resolvent of $\bA$ since it does not have any non-negative eigenvalue.
As $\bmeta$ is not available in practice, we study the correlation function $\bK(s) \coloneqq \langle \bx(\cdot+s) \bx(\cdot)^T \rangle$ of the state variable which can be written as, for $s \ge 0$,
\begin{align} \label{Eq:Kc}
    \bK(s) = e^{s\bA}\bCxx + e^{s \bA} \int_0^s e^{-s'(\bA+\frac{1}{\bmtau})} ds' \, \bQ\bB^T
\end{align} 
The first term is the exponential decay owing to deterministic dynamics, and the second term is the contribution from the colored noise. 
Since Eq.~(\ref{Eq:Kc}) is not a pure exponential function as Eq.~(\ref{Eq:Kw}), it is clear that Eq.~(\ref{Eq:GreenFunc}) is ineffective for a colored-noise-driven process.

We note that, unlike the case of Classical LIM, in the current setup, the dynamical term $\bA$ cannot be easily estimated by $2$ or multiple points of $\bK$; at least the inverse problem cannot be formulated into elementary functions.
Nevertheless, provided that $\bmtau$ is known, the estimation of $\bA$, and even $\bQ$, can be written as a linear problem by using the higher derivatives of $\bK$ at the origin, which we now derive.



\begin{theorem} \label{Thm:Colored-LIM}
    The generalized fluctuation-dissipation relation reads 
    \begin{equation} \label{Eq:LIM-CG-CorrFuc0}
        0 = \bA \bC_{\bx\bx} + \bC_{\bx\bx} \bA^T + \bQ\bB^T + \bB \bQ.
    \end{equation}
    The first, second, and third derivatives of the correlation function at $s = 0$ satisfy
    \begin{equation} \label{Eq:LIM-CG-CorrFuc1}
        \bK'(0) = \frac{1}{2} \big( \bA\bC_{\bx\bx} - \bC_{\bx\bx}\bA^T + \bQ\bB^T - \bB\bQ \big),
    \end{equation}
    \begin{equation} \label{Eq:LIM-CG-CorrFuc2}
        \bK''(0) = \frac{1}{2}\big( \bA(\bK'(0) + \frac{1}{\bmtau} \bC_{\bx\bx}) + (\bK'(0)^T + \frac{1}{\bmtau} \bC_{\bx\bx})\bA^T \big).
    \end{equation}
    and 
    \begin{align}\label{Eq:LIM-CG-CorrFuc3}
        \bK'''(0) &= \frac{1}{2}\big( \frac{1}{\bmtau^2}\bK'(0) + \bA(\bK''(0) - \frac{1}{\bmtau^2} \bC_{\bx\bx}) \nonumber \\
        & \indent - \frac{1}{\bmtau^2}\bK'(0)^T - (\bK''(0) - \frac{1}{\bmtau^2} \bC_{\bx\bx}) \bA^T \big).
    \end{align} 
    In addition, $\bK'(0)$ and $\bK'''(0)$ are skew-symmetric and $\bK''(0)$ is negatively definite. 
    Therefore, the systems of equations (\ref{Eq:LIM-CG-CorrFuc0}), (\ref{Eq:LIM-CG-CorrFuc2}), and (\ref{Eq:LIM-CG-CorrFuc3}) characterizes $\bA$ and $\bQ$. In particular, if $\bA$ is negatively definite, then Eqs.~(\ref{Eq:LIM-CG-CorrFuc0}) and (\ref{Eq:LIM-CG-CorrFuc2}) are sufficient. 
\end{theorem}

\begin{proof}

The Fokker-Planck equation for the probability distribution $P(x,\eta,t)$ of Eq.~(\ref{Eq:CG-and-SP-combined}) reads 
\begin{align*}
    \frac{\partial}{\partial t} P 
    &= -\sum_{i,j} \bA_{ij}\frac{\partial}{\partial x_i} x_j P - \sum_{i,j} \frac{\partial}{\partial x_i} (\sqrt{2\bQ})_{ij} \eta_{j}P \\ & \indent + \frac{1}{\bmtau} \sum_{i}\frac{\partial}{\partial \eta_i} \eta_i P + \frac{1}{2\bmtau^2} \sum_{i,j} \frac{\partial^2}{\partial \eta_i \partial \eta_j} P \\
    &= \LFP P,
\end{align*}
and the adjoint Fokker-Planck operator $\LFP^*$ is 
\begin{align*}
    \LFP^* &= \sum_{i,j} \bA_{ij}x_j\frac{\partial}{\partial x_i} + \sum_{i,j} (\sqrt{2\bQ})_{ij} \eta_j \frac{\partial}{\partial x_i} \\ & \indent - \frac{1}{\bmtau} \sum_{i}\eta_i\frac{\partial}{\partial \eta_i} + \frac{1}{2\bmtau^2}\sum_{i,j}\frac{\partial^2}{\partial \eta_i \partial \eta_j}.
\end{align*}
    
A direct computation shows that 
\begin{align*}
    \LFP^* \eta_p &= \frac{-1}{\bmtau} \eta_p,
    \intertext{and}
    \LFP^* x_p &= \sum_{j} \bA_{pj} x_j + \sum_{j} \eta_j (\sqrt{2\bQ})_{pj}. 
\end{align*}

By using a similar technique as in the proof of Corollary~\ref{Cor:FD-relation}, we have 
\begin{align*}
    0 &= \int_{\R^{2n}} x_p x_q \LFP \pst \,dx \,d\eta \\
    &= \sum_j \bA_{pj} \langle \bx_j \bx_q \rangle + \sum_j \bA_{qj} \langle \bx_j \bx_p \rangle \\
    & \indent + \sum_j (\sqrt{2\bQ})_{pj} \langle \bmeta_j \bx_q \rangle + \sum_j (\sqrt{2\bQ})_{qj} \langle \bmeta_j \bx_p \rangle.
\end{align*}
Applying Eq.~(\ref{Eq:Cov-x-eta}), we obtain, in matrix notation,
\begin{equation*} 
    0 = \bA \bC_{\bx\bx} + \bC_{\bx\bx} \bA^T + \bQ\bB^T + \bB \bQ.
\end{equation*}

Next, we compute the derivatives of the correlation function \cite{Jung1985,Risken1989}.
\begin{widetext}
   \begin{align*}
    \bK_{pq}(s) &= \int_{\R^{2n}} x_p e^{s \cdot {\LFP}}\pst x_q \,dx \,d\eta\\
    &= \int_{\R^{2n}} x_q \pst e^{s\cdot \LFP^*} x_p \,dx \,d\eta\\
    &= \int_{\R^{2n}} x_q \pst (\bI + s\LFP^* + \frac{s^2}{2} {\LFP^*}^2 + \frac{s^3}{6} {\LFP^*}^3) x_p + O(s^4) \,dx \,d\eta\\
    &= \langle \bx_p \bx_q \rangle + s \int_{\R^{2n}} x_q \pst \LFP^* x_p \,dx \,d\eta + \frac{s^2}{2} \int_{\R^{2n}} x_q \pst {\LFP^*}^2 x_p \,dx \,d\eta + \frac{s^3}{6} \int_{\R^{2n}} x_q \pst {\LFP^*}^3 x_p \,dx \,d\eta + O(s^4),
\end{align*} 
\end{widetext}
where $\pst(x,\eta)$ is the stationary probability distribution (i.e., $\LFP\pst = 0$).
By a direct computation, the first-order term is 
\begin{equation*}
    \sum_j \bA_{pj} \langle \bx_j \bx_q \rangle + \sum_j (\sqrt{2\bQ})_{pj} \langle \bmeta_j \bx_q \rangle.
\end{equation*}
Hence, we have, in the matrix notation,
\begin{equation*} 
    \bK'(0) = \bA\bC_{\bx\bx} + \bQ\bB^T.
\end{equation*}
By Eq.~(\ref{Eq:LIM-CG-CorrFuc0}), we see that $\bK'(0)$ is skew-symmetric and
\begin{align*}
    \bK'(0) &= \frac{1}{2}\big(\bK'(0) - \bK'(0)^T\big) \\
    &= \frac{1}{2} \big( \bA\bC_{\bx\bx} - \bC_{\bx\bx}\bA^T + \bQ\bB^T - \bB\bQ \big).
\end{align*}

By a similar computation, the second-order term becomes
\begin{align*}
    \bK''(0) &= \bA\bA \bC_{\bx\bx} + \bA\bQ\bB^T - \frac{1}{\bmtau} \bQ\bB^T \\
    &= \bA \bK'(0) - \frac{1}{\bmtau} \bQ\bB^T.
\end{align*}
To see $\bK''(0)$ is symmetric, we compute 
\begin{align*}
    \bA\bA &\bC_{\bx\bx} + \bA\bQ\bB^T - \frac{1}{\bmtau} \bQ\bB^T \\
    &= -\bA \bC_{\bx\bx} \bA^T -\bA \bB \bQ  - \frac{1}{\bmtau} \bQ\bB^T \\
    &= -\bA \bC_{\bx\bx} \bA^T - \bA \bB \bQ + \frac{1}{\bmtau} \bB\bQ - \frac{1}{\bmtau} (\bB\bQ + \bQ\bB^T) \\
    &= -\bA \bC_{\bx\bx} \bA^T + \frac{1}{\bmtau} \bQ - \frac{1}{\bmtau} ( \bB\bQ + \bQ\bB^T ).
\end{align*}
Since $\bK''(0)$ is symmetric, we obtain
\begin{align*}
    \bK''(0) &= \frac{1}{2}\big( \bK''(0) + \bK''(0)^T \big) \\
    &= \frac{1}{2} \big( \bA(\bK'(0) + \frac{1}{\bmtau} \bC_{\bx\bx}) + (\bK'(0)^T + \frac{1}{\bmtau} \bC_{\bx\bx})\bA^T \big).
\end{align*}

Moreover, repeating the argument for the third-order term, we have
\begin{equation*}
    \bK'''(0) = \bA^3\bC_{\bx\bx} + \bA^2\bQ\bB^T - \frac{1}{\bmtau}\bA\bQ\bB^T + \frac{1}{\bmtau^2}\bQ\bB^T - \frac{1}{\bmtau^2}\bQ.
\end{equation*}
Again, a similar argument shows that $\bK'''(0)$ is skew-symmetric, and we complete the proof.
\end{proof}

Theorem~\ref{Thm:Colored-LIM} is the core of the Colored-LIM algorithm.
It is now clear that given an observation data $\{ x(t) \}$, after a potential detrending process, the Colored-LIM computes the dynamical term $\bA$ by solving linear equations (\ref{Eq:LIM-CG-CorrFuc2}) and (\ref{Eq:LIM-CG-CorrFuc3}) and then the diffusion term $\bQ$ by the generalized FDR (\ref{Eq:LIM-CG-CorrFuc0}), provided the noise correlation time $\bmtau$ is known. Before addressing the issue of determining $\bmtau$, we discuss the difference between Classical-LIM and Colored-LIM.

In the $1$-dimensional case, we have $\bK'(0) = 0$ and
    \begin{equation} \label{Eq:1d-case}
        \frac{\bK''(0)}{\bC_{\bx\bx}} = \frac{\bA}{\bmtau}.
    \end{equation}
The derivatives of the correlation function reveal one of the fundamental differences between the white-noise-driven and colored-noise-driven processes. 
The correlation time near $s = 0$ is sharp and not (two-sided) differentiable at the origin for a white-noise-driven process while it is smoother for a colored-noise-driven one. 

In the white noise limit $\bmtau \to 0$, the Colored-LIM does not reduce to the Classical LIM since this limit process does not pass through the derivative. 
In fact, in a $1$-dimensional case, the correlation function of a colored-noise driven process becomes more concentrated at the origin while keeping its vanishing first derivative as $\bmtau$ decreases, but the one-sided right derivative of the correlation function in a white-noise case is $\bA\bC_{\bx\bx} \ne 0$.
Likewise, the higher order derivatives (\ref{Eq:LIM-CG-CorrFuc2}) and (\ref{Eq:LIM-CG-CorrFuc3}) do not converge to a white-noise case as $\bmtau \to 0$, indicating that the temporal structure of the colored noise changes the natural of the system. 

\subsection{The determination of the noise correlation time} \label{Chap:Colored-LIM-algorithm}

If the noise correlation time $\tau$ is known a prior, then in the previous subsection, we have shown that the dynamical and diffusion terms can be obtained by solving linear equations. 
However, in practice, $\tau$ is often unknown or is a target for studying, indicating that we either make an assumption on $\tau$ or require an empirical method to determine $\tau$ from observation. 
Here, we utilize the observed correlation function $\Kobs$ to propose a $\tau$-selection algorithm by the minimization
\begin{align} \label{Eq:Minimization}
    \taucglim = \argmin_{\tau} \left\Vert (K-\Kobs)|_{[0,l]} \right\Vert_F
\end{align} 
where $l$ is a free variable that represents the window length and $\left\Vert \cdot \right\Vert_F$ denotes the Frobenius norm.
More precisely, for each candidate $\tau$, the linear dynamics $A(\tau)$ and diffusion $Q(\tau)$ are solved as shown in the previous section; then the theoretical correlation function $K = K(A(\tau),Q(\tau),\tau)$ is computed by Eq.~(\ref{Eq:Kc}) and compared with the observed correlation function $\Kobs$. 
The model output $\taucglim$ is chosen to be the minimizer, and the corresponding linear dynamics and diffusion are the desired output. 
Finally, we note that the $\tau$-selection is a $1$-dimensional problem that can be done even by brutal force, and the condition $l \ge 2 \Delta t$ is required to avoid multiple minimizers.
The full Colored-LIM algorithm is summarized in Alg.~\ref{Alg:LIM-CG}.

\begin{algorithm}[H]
\caption{Colored-LIM (The full algorithm)}\label{Alg:LIM-CG}
\begin{algorithmic}[1]
\Input{A time-series data $\{x(t)\}$ and timestep $\Delta t$}
\Output{$\Acglim$\text{, } $\Qcglim$ \text{, and} $\taucglim$}
\State Detrend (optional).
\State Compute the correlation function and its derivatives $\Kobs^{(m)}(0)$ for $m = 0,\dots,3$.
\For{candidate $\tau$}
    \State Compute $A(\tau)$ by Eqs.~(\ref{Eq:LIM-CG-CorrFuc2}) and (\ref{Eq:LIM-CG-CorrFuc3}).
    \State Compute $Q(\tau)$ by Eq.~(\ref{Eq:LIM-CG-CorrFuc0}).
    \State Compute the correlation function $K$ for the linear system by Eq.~(\ref{Eq:Kc}).
    \State Compute the error shown in Eq.~(\ref{Eq:Minimization}).
\EndFor
\State Select $\tau$ that minimizes the error and set it to be $\taucglim$.
\State Select $\Acglim$ and $\Qcglim$ corresponding to $\taucglim$.
\end{algorithmic}
\end{algorithm}

\subsection{From an application perspective} \label{Chap:Application-viewpoint}

The Colored-LIM can be applied to a wide range of problems. Here, we briefly discuss its potential applications.

\begin{itemize}
    \item For an autonomous system with additive noise of the form 
    \begin{align} \label{Eq:auto-system}
        \frac{d}{dt}\bx = f(\bx,\mu) + \textit{noise}
    \end{align} 
    where the noise term can be either white or colored noise, the LIMs estimate the Jacobian $\Jf$ of the dynamics $f$.
    If the non-linear dynamics $f$ is mathematically tractable, then $\Jf$ may provide further insight into the underlying system. 
    See sections~\ref{Chap:Network-identification} and \ref{Chap:ENSO} for examples. 
    \item In real-world applications, LIMs are often applied to lower-ranked representations of higher-dimensional data for two key reasons.
    First, the dominant modes of the data typically capture the essential physical phenomena of interest, and the projected dynamics and diffusion describe the interactions between these modes. 
    Second, applying LIMs directly to higher-dimensional time-series data can lead to numerical instability, particularly when the observation window is limited. 
    These issues are mitigated by reducing dimensionality, giving rise to more interpretable and robust results. 
    Typical methods for mode identification and dimensionality reduction include principal component analysis (PCA).
    See section~\ref{Chap:DMD} for instance.    
\end{itemize}

\subsection{Connections with dynamic mode decomposition} \label{Chap:Conn-with-DMD}

DMD is a data-driven method that focuses on extracting spatiotemporal patterns from observation data without requiring a priori knowledge of the underlying system equations.
Though the DMD framework covers a wide range of variants, the \textit{projected} DMD and the Classical LIM are equivalent algorithms under certain conditions \cite{Tu2014}. 
To see the connection between the Colored-LIM and the projected DMD, we start by briefly introducing the DMD. For a more thorough discussion, we refer the readers to \cite{Kutz2016,Schmid2022,Tu2014}.

The DMD adopts an equation-free perspective as LIMs do and estimates the unknown dynamics of a complex system 
\begin{align} \label{Eq:General-DMD-eq}
    \frac{d}{dt}\bx = f(\bx,t,\mu)
\end{align}
by producing an approximate local linear system of the form
\begin{align} \label{Eq:DMD-eq}
    \frac{d}{dt}\bx = \bA \bx
\end{align} 
with initial condition $\bx(0)$.
For a continuous process $\bx$ satisfying Eq.~(\ref{Eq:DMD-eq}), we have 
\begin{align} \label{Eq:DMD-discrete}
    \bx(t+\Delta t) = \bL \bx(t)
\end{align}
where $\bL = e^{\bA\Delta t}$. 
Eq.~(\ref{Eq:DMD-discrete}) is the key ingredient of the DMD.
Given a discrete time-series $\{x_j = x(j \Delta t)\}$, the projected DMD represents the system dynamics through $Y = \Ldmd X$ where 
\begin{align*}
    Y = \begin{bmatrix}
    | &  & | \\
    x_1 & \cdots & x_N \\
    | &  & |
\end{bmatrix}, 
\text{ and }
    X = \begin{bmatrix}
    | &  & | \\
    x_0 & \cdots & x_{N-1} \\
    | &  & |
\end{bmatrix},
\end{align*}
and extracts it by linear regression followed by matrix logarithm,
\begin{align} \label{Eq:DMD-regression}
    \Admd = \frac{1}{\Delta t}\log(Y X^\dag),
\end{align}
where the dagger $\dag$ denotes the pseudo-inverse.

The projected DMD framework differs from the Classical LIM framework in several aspects. 
Firstly, the DMD considers a noise-free dynamical system (except for a potential measurement noise). 
Hence, a limited sampling size is allowed by DMD, while for LIMs, sufficient sampling is required to average out the random perturbation so that the observed correlation function decently represents the underlying system. 
Moreover, DMD does not require the system state to be close to an equilibrium point; hence, DMD is valid for input data with or without mean removed. 
However, without the mean removed, the dynamical matrix estimated by DMD cannot be interpreted by the Jacobian of $f$.

When the mean is removed, the projected DMD is equivalent to the Classical LIM in the sense that the estimated dynamical matrices are identical. 
The proof can be found in \cite{Tu2014}, so here we simply give a heuristic argument. 
The regression process applied to the pseudo-inverse in Eq.~(\ref{Eq:DMD-regression}) assumes independent and identically distributed (IID) white noise residuals that correspond to the noise term in Eq.~(\ref{Eq:WienerProcess}).
The source of the residuals in the projected DMD framework is interpreted by the non-linearity and measurement error, but in the Classical LIM framework, furthermore interpreted by white noise random forcing.
The equivalence can be seen from a regression perspective as well.
In the Classical LIM framework, multiple linear regression (MLR) can be used in the equation 
\begin{align} \label{Eq:MLR}
    x_{j+1} - x_j = \Alim \, x_j \Delta t + \xi \Delta t
\end{align} 
to estimate the linear dynamics instead of the method of transition matrix introduced in section~\ref{Chap:TransFunc} due to the IID white noise term.
Therefore, one now easily see that Eq.~(\ref{Eq:MLR}) results in the generator of $\Ldmd$ in Eq.~(\ref{Eq:DMD-discrete}).

In the Colored-LIM framework, the correlation between the state variable and colored noise (\ref{Eq:Cov-x-eta}) leads to non-IID residual terms from a regression perspective.
Hence, the MLR approach or other simple regression methods is not valid for estimating the linear dynamics of a colored-noise-driven process.  
Nevertheless, the Colored-LIM is developed by circumventing the non-trivial correlation from an SDE perspective so that we may say that from a regression perspective, the Colored-LIM allows the residuals to depend on themselves and the observation data.  
Therefore, as a new variant of the Classical LIM, the Colored-LIM can be viewed as an extension of DMD, and be utilized to find the dynamic mode of a colored-noise-driven stochastic system. 


\section{Validation} \label{Chap:NumExp}
In this section, we validate the effectiveness of Colored-LIM by ideal numerical studies. 
Eqs.~(\ref{Eq:LIM-CG-CorrFuc0}), (\ref{Eq:LIM-CG-CorrFuc2}), and (\ref{Eq:LIM-CG-CorrFuc3}) are essentially based on infinite realizations of the linear system (\ref{Eq:CG-StochasticProcess}), but in practice, we only have a time-series observation data (i.e., one realization).
For each experiment, we generate the realization of Eq.~(\ref{Eq:CG-StochasticProcess}) by using the second-order Milstein approximation with an integration time step $0.001$ and time span $T$ \cite{Milshtein1979}, and then subsampling so that the observation $\{ x(t) \}$has equal sampling interval $\Delta t$.
Finally, we apply the Colored-LIM to estimate the dynamics and random forcing. 
For ideal numerical studies, no detrending is applied.

Before proceeding, we explain the terminology.
For terminological convenience, we call the linear system (\ref{Eq:CG-Process}) determined by the dynamics and random forcing output from Alg.~\ref{Alg:LIM-CG} the Colored-LIM process, or simply Colored-LIM, with an abuse of terminology whenever there is no confusion.
Also, we use Colored-LIM as an adjective to indicate, for example, the correlation functions determined by a Colored-LIM process. 
A similar terminology applies to the Classical LIM.

\subsection{Solving a linear problem}
To validate Eqs.~(\ref{Eq:LIM-CG-CorrFuc0}), (\ref{Eq:LIM-CG-CorrFuc2}), and (\ref{Eq:LIM-CG-CorrFuc3}), we consider the following linear problem
    \begin{align} \label{Eq:Linear_Problem}
    \frac{d}{dt}
    \begin{bmatrix}
        \bx_1 \\ \bx_2 \\ \bx_3 
    \end{bmatrix} 
    & =
    \begin{bmatrix*}[r]
        -1.0 & 0.5 & 1.0 \\ 0.5 & -2.0 & 0.0 \\ 0.0 & 1.0 & -1.0
    \end{bmatrix*} 
    \begin{bmatrix}
        \bx_1 \\ \bx_2 \\ \bx_3 
    \end{bmatrix} 
    \nonumber \\
    & \indent +
    \sqrt{ 2 \cdot
    \begin{bmatrix*}[r]
        1.0 & 0.0 & 0.5 \\ 0.0 & 2.0 & 1.0 \\ 0.5 & 1.0 & 2.0
    \end{bmatrix*}
    }
    \begin{bmatrix}
        \bmeta_1 \\ \bmeta_2 \\ \bmeta_3 
    \end{bmatrix} 
    ,
\end{align}
with $\bmtau = 0.5$ and realize the state variable $\bx$ with $T = 1000$, and $\Delta t = 0.1$.
To evaluate the performance of Colored-LIM, we use the relative error $e_{\bX}$ measured by the Frobenious norm. That is,
\[ e_{\bX} = \frac{\left\Vert X_\text{Model} - \bX \right\Vert_F}{\left\Vert \bX \right\Vert_F}, \]
where $X_\text{Model}$ is the model output and $\bX$ is the ground truth.

Given observation data, without any prior knowledge, a direct application of Colored-LIM with window length $l = 1$ returns
\begin{widetext}
   \begin{align} 
    \Acglim
    =
    \begin{bmatrix*}[r]
       -1.02 &  0.42 &  1.02 \\
        0.41 & -2.05 &  0.10 \\ 
       -0.02 &  0.80 & -1.00
    \end{bmatrix*}
    \text{, }    
    \Qcglim
    =
    \begin{bmatrix*}[r]
      1.10 & 0.00 & 0.59 \\
      0.00 & 1.92 & 0.99 \\
      0.59 & 0.99 & 2.18
    \end{bmatrix*}
    \text{, and }
    \taucglim = 0.53,
\end{align} 
\end{widetext}
whose relative errors $e_\bA$, $e_\bQ$, and $e_\bmtau$ are $9.7\%$, $7.9\%$, and $6.2\%$ respectively.
Indeed, the Colored-LIM mechanism can be visualized by Fig.~\ref{Fig:Linear_CorrFunc}, which shows the true correlation function $\bK$ of the linear system (\ref{Eq:Linear_Problem}), the observed correlation function $\Kobs$ based on the realization, and the Colored-LIM correlation function $K = K(\Acglim,\Qcglim,\taucglim)$.
First of all, $\bK$ and $\Kobs$ are not perfectly matched, with an absolute difference $E_0 = \left\Vert \bK - \Kobs \right\Vert_F = 0.520$, due to the numerical error introduced by the Milstein approximation which we cannot avoid. 
However, $\Kobs$ and $K$ are significantly more closely aligned, with an absolute difference $E_1 = \left\Vert K - \Kobs \right\Vert_F = 0.096$, significantly smaller than $E_0$. 
One can imagine that the Colored-LIM algorithm learns the underlying system by finding the best approximate linear system from $\Kobs$ without being aware of the true answer, or equivalently, by fitting the theoretical $K$ to $\Kobs$ over the search space $\{ \tau > 0 \}$.
Therefore, in this linear problem, the relative errors are indeed mostly due to the numerical error of Milstein approximation.

\begin{figure}[h]
    \centering
    \includegraphics[]{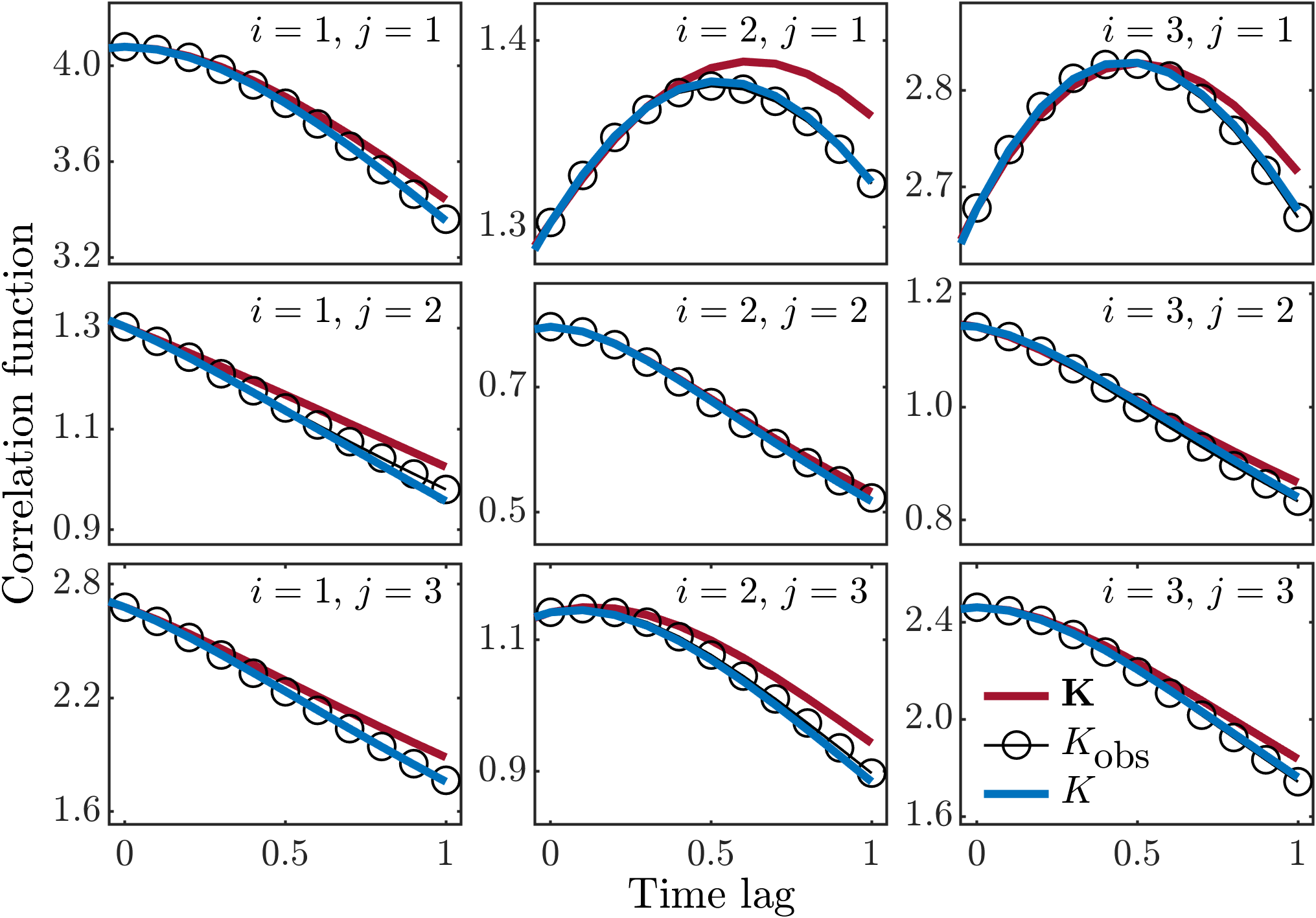}
    \caption{ The $(i,j)$-entry of the correlation functions where $i$ and $j$ index the rows and columns, respectively. Each subplot corresponds to a different combination of $i$-th and $j$-th variables. The ground truth $\bK$ for Eq.~(\ref{Eq:Linear_Problem}) is represented by the solid red line, the observation $\Kobs$ is marked by a black circle, and the one $K$ constructed by Colored-LIM is shown by blue. }
    \label{Fig:Linear_CorrFunc}
\end{figure}

To ensure that the discussion is not merely a special case or a rare incident, we repeat the above process over $1024$ independent realizations and summarize the statistics in Table~\ref{Table:LinearProblem}.
In addition, it is clear that as the time span $T$ increases to $5000$, the relative and absolute errors decrease. 
At the same time, as the numerical error of Milstein approximation decreases for a larger $T$, the difference between $E_0$ and $E_1$ becomes less significant.

\begin{table}[h]
\caption{\label{Table:LinearProblem}%
The statistics of relative and absolute errors under different time spans $T$ and noise correlation times $\bmtau$. For each condition, the errors are presented as mean $\pm$ standard deviation, with relative errors $e_\bullet$ expressed in percentages and absolute errors $E_\bullet$ being unitless.
}
\begin{ruledtabular}
\begin{tabular}{cccccc}
 & $e_\bA$ \fontsize{7pt}{7pt}\selectfont (\%) & $e_\bQ$ \fontsize{7pt}{7pt}\selectfont (\%) & $e_\bmtau$ \fontsize{7pt}{7pt}\selectfont (\%) & $E_0$ & $E_1$ \\
\midrule
    \fontsize{7pt}{7pt}\selectfont $\begin{array} {r@{}l@{}} T & {}= 1000 \\ \bmtau & {}= 0.5 \end{array}$ &
    \fontsize{7pt}{7pt}\selectfont $10.5_{\pm 4.0}$ &
    \fontsize{7pt}{7pt}\selectfont $8.6_{\pm 5.9}$ &
    \fontsize{7pt}{7pt}\selectfont $4.6_{\pm 3.6}$ &
    \fontsize{7pt}{7pt}\selectfont $0.3105_{\pm 0.1519}$ & 
    \fontsize{7pt}{7pt}\selectfont $0.1325_{\pm 0.0581}$ \\
\cmidrule(l{0.2em}r{0.2em}){1-6}
    \fontsize{7pt}{7pt}\selectfont $\begin{array} {r@{}l@{}} T & {}= 5000 \\ \bmtau & {}= 0.5 \end{array}$ & 
    \fontsize{7pt}{7pt}\selectfont $6.1_{\pm 1.8}$ &
    \fontsize{7pt}{7pt}\selectfont $4.5_{\pm 2.6}$ &
    \fontsize{7pt}{7pt}\selectfont $4.1_{\pm 2.2}$ & 
    \fontsize{7pt}{7pt}\selectfont $0.1225_{\pm 0.0717}$ &
    \fontsize{7pt}{7pt}\selectfont $0.1049_{\pm 0.0299}$ \\
\cmidrule(l{0.2em}r{0.2em}){1-6}
    \fontsize{7pt}{7pt}\selectfont $\begin{array} {r@{}l@{}} T & {}= 1000 \\ \bmtau & {}= 1.0 \end{array}$ &
    \fontsize{7pt}{7pt}\selectfont $16.2_{\pm 6.0}$ & 
    \fontsize{7pt}{7pt}\selectfont $22.5_{\pm 13.2}$ & 
    \fontsize{7pt}{7pt}\selectfont $10.4_{\pm 6.9}$ & 
    \fontsize{7pt}{7pt}\selectfont $0.1994_{\pm 0.1007}$ & 
    \fontsize{7pt}{7pt}\selectfont $0.0657_{\pm 0.0324}$ \\
\cmidrule(l{0.2em}r{0.2em}){1-6}
\fontsize{7pt}{7pt}\selectfont $\begin{array} {r@{}l@{}} T & {}= 5000 \\ \bmtau & {}= 1.0 \end{array}$ & 
\fontsize{7pt}{7pt}\selectfont $11.9_{\pm 3.6}$ &
\fontsize{7pt}{7pt}\selectfont $19.2_{\pm 7.1}$ & 
\fontsize{7pt}{7pt}\selectfont $8.5_{\pm 4.0}$ & 
\fontsize{7pt}{7pt}\selectfont $0.0891_{\pm 0.0489}$ &
\fontsize{7pt}{7pt}\selectfont $0.0394_{\pm 0.0150}$ \\
\end{tabular}
\end{ruledtabular}
\end{table}

Next, if the noise correlation time $\bmtau = 0.5$ is known, then the Colored-LIM returns the mean of $e_\bA$ and $e_\bQ$ to be $11.0\%$ and $7.6\%$, respectively, for $T = 1000$.
Here, we merely mean to demonstrate that the prior knowledge can be incorporated into the Colored-LIM algorithm instead of stating that the performance will improve, since again, the numerical error from the Milstein approximation is inevitable.

We have tested Eqs.~(\ref{Eq:LIM-CG-CorrFuc0}), (\ref{Eq:LIM-CG-CorrFuc2}), and (\ref{Eq:LIM-CG-CorrFuc3}) for various $\bA$, $\bQ$, $\bmtau$, $l$ and $\Delta t$. 
As long as the $T$ is sufficiently long and $\Delta t$ is sufficiently fine so that the observed correlation function well represents the underlying system, Eqs.~(\ref{Eq:LIM-CG-CorrFuc0}), (\ref{Eq:LIM-CG-CorrFuc2}), and (\ref{Eq:LIM-CG-CorrFuc3}) can estimate the underlying dynamics and random forcing accurately. 
However, we notice that in general, for a larger $\bmtau$, a longer observation is required since the sample path of $\bx$ is greatly smoothed owing to the strong noise memory, and it takes more time for $\bx$ to walk through the state space.
For example, we consider the linear problem (\ref{Eq:Linear_Problem}) again, with the noise correlation time set to $\bmtau = 1$, while keeping all other parameters and conditions the same as before.
Table~\ref{Table:LinearProblem} shows that the mean values of the relative errors are larger than the case of $\bmtau = 0.5$.

\subsection{Solving a non-linear problem: SIS model and network identification}\label{Chap:Network-identification}

Though Colored-LIM stems from linear approximation, we show that Eqs.~(\ref{Eq:LIM-CG-CorrFuc0}), (\ref{Eq:LIM-CG-CorrFuc2}), and (\ref{Eq:LIM-CG-CorrFuc3}) can be applied to non-linear problems to gain insightful information on the underlying complex systems.
In particular, we consider the ideal susceptible-infectious-susceptible (SIS) model.

The SIS model is a population-level model used in epidemiology and infectious disease modeling.
It contains two compartments for the susceptible and infectious populations with the infectious rate $\beta$ and the recovery rate $\gamma$, characterizing infectious diseases that do not provide lifelong immunity \cite{Bartesaghi2024}.
The scalar SIS model assumes that individuals are mixed homogeneously so that every individual has an equal likelihood of interacting with others, and is described by the differential equation
\begin{align} \label{Eq:Scaler-SIS}
    \frac{d\bx}{dt} = \beta (1-\bx)\bx - \gamma \bx.
\end{align} 
which admits a closed solution. 
It is well-known that a unique disease-free steady state exists if the basic reproductive ratio $R = \frac{\beta}{\gamma}$ is smaller than the epidemic threshold $R_s = 1$.
On the other hand, if $R>R_s$, there are $2$ equilibrium points: the unstable disease-free steady state and the exponentially stable endemic steady state with $\bx^s = 1 - \frac{\gamma}{\beta}$. 
In addition, a bifurcation occurs at $R = R_s$.
Due to the simplicity of Eq.~(\ref{Eq:Scaler-SIS}), its mathematical properties are widely studied, but in practice, it may be an over-idealization of the complex social and epidemic behaviors.

In this article, we consider a stochastic network-level SIS model in which each node represents a group of individuals like a city, or a country, the weighted edges describe potential interactions that could lead to disease transmission for instance, transportation network, and the noise models the perturbation such as variations in the incubation period among individuals \cite{Bartesaghi2024,Bonaccorsi2022,VanMieghem2012}.
More specifically, the governing equation is written as 
\begin{align} \label{Eq:Network-SIS}
    \frac{d\bx}{dt} &= f(\bx,[\bW,\beta,\gamma]) + \sqrt{2\bQ}\bmeta \nonumber \\ 
    &= \beta \big( \bI-\text{diag}(\bx) \big) \bW \bx - \gamma \bx + \sqrt{2\bQ}\bmeta.
\end{align} 
where $\bW$ denotes the weight matrix of a \textit{simple} graph (i.e., no multi-loop or self-loop), $\text{diag}(\bx)$ is a diagonal matrix whose diagonal are $\bx$, and $\bmeta$ is the OU colored noise of the form (\ref{Eq:CG-Process}).
The epidemic threshold $R_s$ depends on the topology of the network, and when the endemic steady state $\bx^s$ exists, the Jacobian $\Jf$ at $\bx^s$ reads
\begin{align}
    \Jf(\bx^s) = 
    \begin{cases}
        \beta ( 1 - \bx^s_i )\bW_{ij}, & \text{ if } i \ne j\\
        -\beta \sum_k \bW_{ik}\bx^s_k - \gamma, & \text{ if } i = j.
    \end{cases}
\end{align}
With the Jacobian, the relative magnitude $\widetilde{\bW}$ of weights on edges can be retrieved even with $\beta$ unknown (i.e., $\widetilde{\bW}_{ij} = \beta \, \bW_{ij}$ for $i \ne j$). 

We set up the SIS model (\ref{Eq:Network-SIS}) with $\beta = 1$, $\gamma = 2$, $\bmtau = 1$, and $\bQ = 5 \times 10^{-5}\,\bI$ where $\bQ$ is chosen such that the sample path has a high probability to stay in [0,1], and simple weight (i,e, $\bW_{ij} = 1$ if and only if node $i$ and $j$ are adjacent)

\begin{align*}
    \bW = \widetilde{\bW} = 
    \begin{bmatrix}
        0 & 1 & 1 & 1 \\
        1 & 0 & 1 & 0 \\
        1 & 1 & 0 & 0 \\
        1 & 0 & 0 & 0
    \end{bmatrix}, 
\end{align*}
which represents the network interactivity, as visually shown in Fig.~\ref{Fig:Network-SIS}. 
We demonstrate the effectiveness of Colored-LIM in the SIS model with or without prior knowledge by generating an ensemble of realization $\{ x(t) \}$ of the prevalence of infected individuals using the Euler method with time span $T = 3000$, integration time step $0.001$, and sampling interval $\Delta t = 0.1$.

\begin{figure}[h]
    \centering
    \begin{tikzpicture}
    \node[mynode](n4) at (0,-0.5){4};
    \node at (1.9,-0.5) {\includegraphics{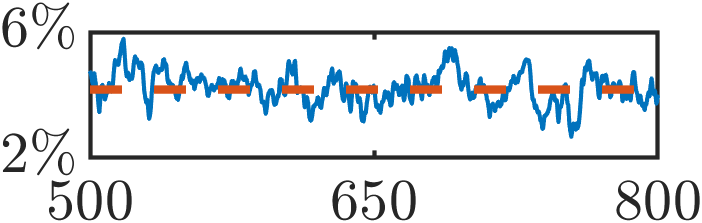}}; 
    \node[mynode](n3) at (-0.5,0.5){1}; 
    \node at (1.4,0.5) {\includegraphics{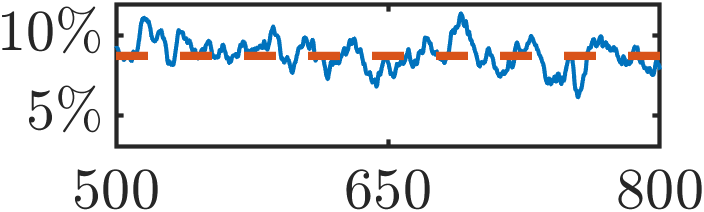}}; 
    \node[mynode](n2) at (-1.0,-0.5){3};
    \node at (-2.9,-0.5) {\includegraphics{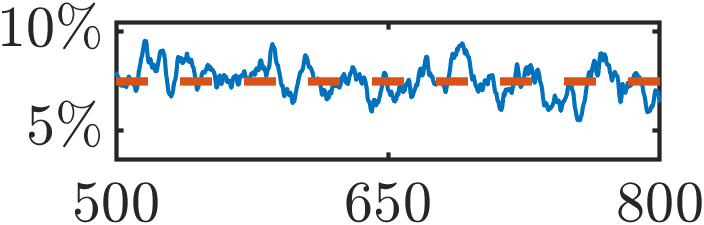}}; 
    \node[mynode](n1) at (-1.5,0.5){2};
    \node at (-3.4,0.5) {\includegraphics{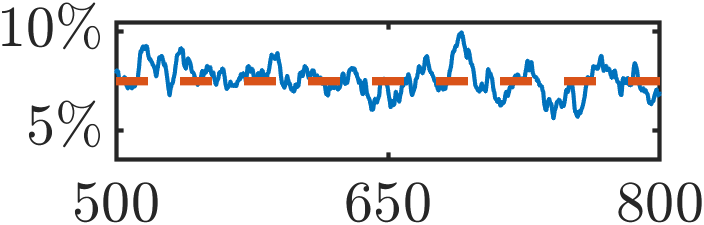}}; 
    \draw[myarrow](n1)--(n3);
    \draw[myarrow](n1)--(n2);
    \draw[myarrow](n2)--(n3);
    \draw[myarrow](n3)--(n4);
    \end{tikzpicture}
    \caption{The underlying network of the network-level SIS model in the numerical experiment in section~\ref{Chap:Network-identification}. The time series shown at each node is a segment of the prevalence of infected individuals (input data). The orange dashed line indicates the analytic mean value (steady state).}
    \label{Fig:Network-SIS}
\end{figure}

\textit{No prior knowledge.} The application of Colored-LIM to $\{x(t)\}$ returns a dynamical matrix $J$ that approximates the Jacobian matrix $\Jf$ and noise correlation time with mean values of $e_\Jf$ and $e_\bmtau$ being $9.0\%$ and $3.4\%$, respectively, which again, are mostly due to the Euler method instead of the linear approximation (\ref{Eq:CG-StochasticProcess}).
Then the relative magnitude of weights $\widetilde{W}_{ij}$ is identified by identified by the non-diagonal entries $\widetilde{W}_{ij} = \frac{J_{ij}}{1-x_i^s } \approx \beta \, \bW_{ij}$, where $x_i^s = \overline{ x_i(t) }$, which is
\begin{align*}
    \widetilde{W} = 
    \begin{bmatrix*}
        0.00_{\pm 0.00} & 0.97_{\pm 0.08} & 0.97_{\pm 0.08} & 0.96_{\pm 0.08} \\
        0.97_{\pm 0.08} & 0.00_{\pm 0.00} & 0.97_{\pm 0.08} & 0.02_{\pm 0.06} \\
        0.97_{\pm 0.08} & 0.97_{\pm 0.08} & 0.00_{\pm 0.00} & 0.02_{\pm 0.06} \\
        0.96_{\pm 0.08} & 0.02_{\pm 0.06} & 0.02_{\pm 0.06} & 0.00_{\pm 0.00} \\
    \end{bmatrix*}
\end{align*}
where the notation $\pm$ represents the standard deviation.
We see that the Colored-LIM successfully captures the topology of the network, where weights $\widetilde{W}_{ij}$ are highly concentrated on the correct entries.

\textit{$\tau$ is known.} 
Under this condition, the relative error $e_\Jf$ exhibits a mean of $9.5\%$, and the estimated weight matrix is as effective as the previous case. 
Again, due to the numerical error of the Euler method, there is no guarantee that the relative error is significantly lower in this ideal numerical study.

\section{Applications to Real-world Complex Systems} \label{Chap:Real-world}

In this section, the Colored-LIM is applied to real-world observation data, and compared with the Classical LIM.
We note that our purpose is not a complete investigation but a demonstration of the potential application and implication of Colored-LIM. 

\subsection{ENSO: prediction skill} \label{Chap:ENSO}
The El Ni\~{n}o-Southern Oscillation (ENSO) is the most influential interannual climate mode, driven by ocean-atmosphere interactions over the tropical Pacific Ocean \cite{Lorenzo2015}.
ENSO alternates between two distinct phases: El Ni\~{n}o and La Ni\~{n}a, characterized by anomalously warm and cool sea surface temperatures (SST) in the central and eastern Pacific, respectively \cite{Capotondi2015,Okumura2011}. 
These phases have significant global impacts, causing shifts in weather patterns that can lead to extreme events such as floods, droughts, and tropical storms, thereby increasing disaster risks worldwide \cite{Cai2015,Philip2014}. 
The Ni\~{n}o 3.4 index, defined by the SST anomaly averaged over the Ni\~{n}o 3.4 region (170W-120W, 5S-5N), is an indicator commonly used to monitor the phase and intensity of El Ni\~{n}o and La Ni\~{n}a.
Consequently, modeling the Ni\~{n}o 3.4 index is crucial for improving the prediction of these extreme events and enabling disaster mitigation strategies to minimize the impacts of ENSO-driven climate variability.

In climate science, small-scale processes such as turbulence or atmospheric events are often considered as external noise forcing in stochastic modeling \cite{Matthew2011,Penland1989,Penland1993}. 
One of the simplest lower-dimensional linear modelings for ENSO uses the anomaly of SST and the 20$^\circ$C isotherm depth (D20) over the Ni\~{n}o 3.4 region. 
Previous studies where the environmental stochasticity is implicitly modeled by memoryless white noise have suggested that the ENSO mechanism can be regarded as a recharge oscillator, where the SST anomaly and D20 anomaly play the roles of momentum and position, respectively \cite{Burgers2005,Jin2007}. 
Here, we re-examine this lower-dimensional representation of ENSO and model the random forcing by white and colored noise.
The data is obtained from the ORAS5 reanalysis from 1979 to 2022 and then is filtered by a $3$-month moving average, which is a common practice in ENSO studies \cite{Matthew2011,Zuo2019}. 
The state variable $\{ x(t) \} \subset \R^2$ is set to be the z-score of the filtered SST and D20 data indexed by $1$ and $2$, respectively; the time lag $\rho$ in the Classical LIM and the window length $l$ in the Colored-LIM is set to be $3$ and $12$ months, respectively.

The LIM results indicate that 
\begin{widetext}
\begin{subequations} \label{Eq:LIM_ENSO}
    \begin{gather} \label{Eq:LIM_ENSO_White}
    \Alim = 
    \begin{bmatrix*}[r]
    -0.0829 & 0.1406 \\
    -0.1629 & 0.0014
    \end{bmatrix*}
    , 
    \Qlim = 
    \begin{bmatrix*}[r]
    0.0440 & 0.0221 \\
    0.0221 & 0.0438
    \end{bmatrix*}
    ,
    \intertext{and}
    \Acglim = 
    \begin{bmatrix*}[r]
    -0.1178 &  0.1372 \\
    -0.1670 & -0.0262
    \end{bmatrix*}
    , 
    \Qcglim =
    \begin{bmatrix*}[r]
    0.0926 & 0.0440 \\
    0.0440 & 0.0916
    \end{bmatrix*}
    , 
    \taucglim = 1.8480, \label{Eq:LIM_ENSO_Colored}
    \end{gather}
\end{subequations}
\end{widetext}
where the time unit is in months.
In the case of Classical LIM, $\Alim$ is close to the Jacobian of the damped harmonic oscillator
\begin{align*}
    \begin{bmatrix*}[c]
    -2\gamma & \omega \\
    -\omega   & 0
    \end{bmatrix*}
\end{align*}
with a period of $2 \pi \omega^{-1} \approx 38.6 \sim 44.7$ months and a delay time $\gamma^{-1} \approx 24.1$ months, consistent with the recharge oscillator model \cite{Burgers2005}.
However, in the Colored-LIM framework, such a structure does not exist in $\Acglim$ since D20 damps itself with an unignorable rate of $-0.0262$ month$^{-1}$, not significantly lower than the other entries.
Moreover, the $1.85$-month $e$-folding time indicates that though random, the atmospheric processes modeled by colored noise $\eta$ may have an unignorable coupling effect (referred to as state dependency in some literature, e.g., \cite{Jin2007}) of 
\begin{align} 
    C_{\eta x}
    = \sqrt{\frac{\Qcglim}{2}}\Bcglim^T = 
    \begin{bmatrix*}[r]
    0.5933 & -0.0001 \\
    0.2761 &  0.6041
    \end{bmatrix*}
\end{align}
where $\Bcglim^T = (\bI - \taucglim\Acglim)^{-1}$,
and a continuous influence on oceanic processes represented by $x$ over months.
This suggests the importance of accounting for the temporal structure, namely the colored noise characteristics, of environmental stochasticity in mathematical modeling.

We regard the Colored-LIM and Classical LIM processes as lower-dimensional representations of the ENSO.
To compare them, we first observe the difference in their sample paths by integrating two LIMs with a $43$-year time span and a $1/12$-month time step and subsampling so that the $\Delta t = 1$ month.
Fig.~\ref{Fig:SamplePath} shows the observation and LIMs' trajectories. 
It seems that in the white-noise case, the trajectory oscillates faster than the observation, while the Colored-LIM trajectory looks more similar to the observation.
Indeed, the correlation functions shown in Fig.~\ref{Fig:CorrFunc} support this viewpoint. 
The Colored-LIM auto-correlations (diagonal entries), due to the skew-symmetry of $K'(0)$ and the negative definiteness of $K''(0)$, have gentler slopes with downward concavity near the origin, which aligns with the observed SST and D20 auto-correlations,
From a physical perspective, the noise memory smooths the trajectory, increasing the correlation between the current and near-future states.  
In contrast, in the Classical LIM framework, the steeper (one-sided) slope of the auto-correlations indicates a more intense oscillation in trajectory, and the (one-sided) concavity though downward in this case, is not guaranteed to be in general.

\begin{figure}[h]
    \centering
    \includegraphics[]{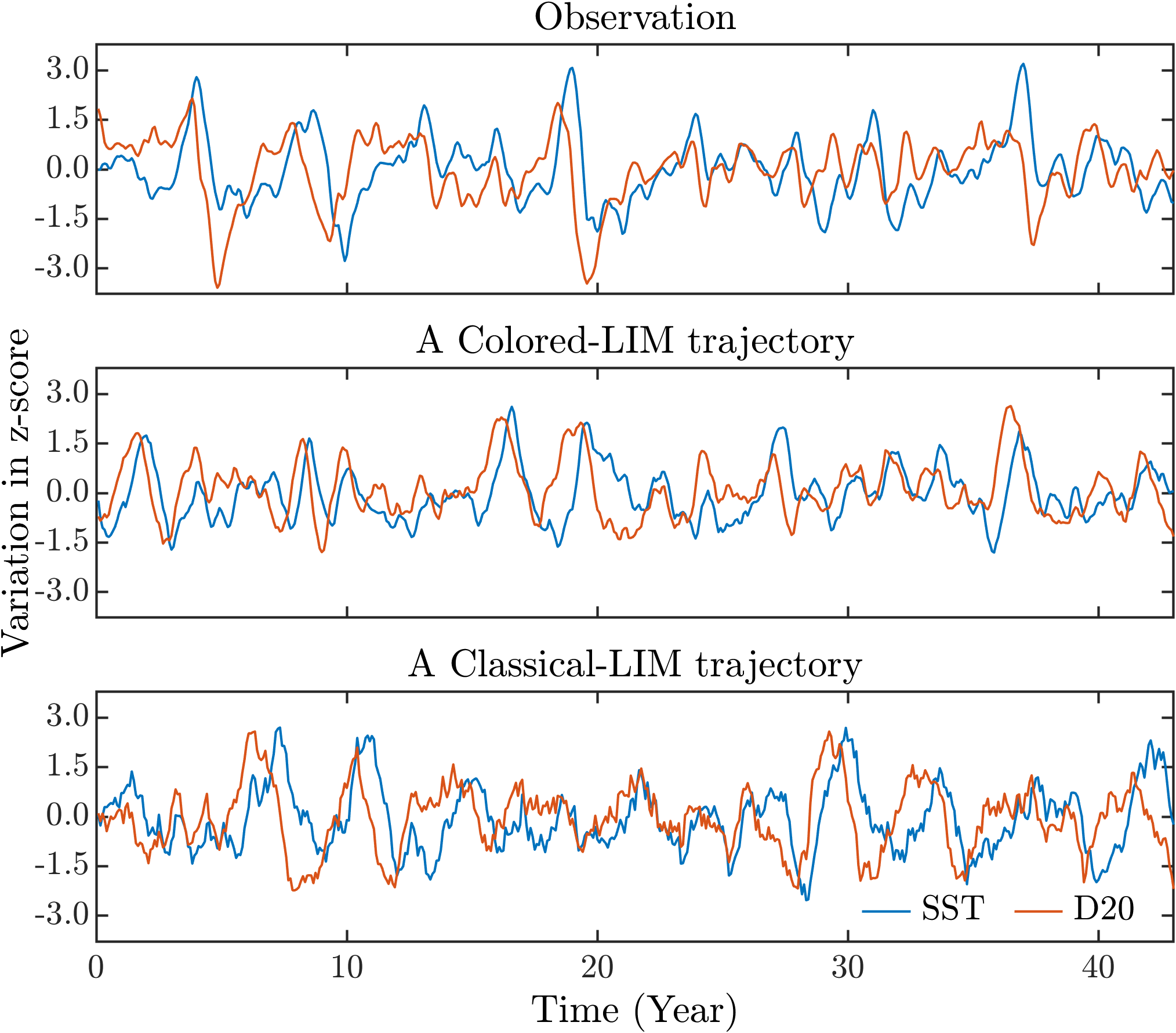}
    \caption{The observation of SST and D20 from 1979 to 2022, and a trajectory of Colored-LIM and Classical-LIM processes of length $43$ years.}
    \label{Fig:SamplePath}
\end{figure}

\begin{figure}[h]
    \centering
    \includegraphics[]{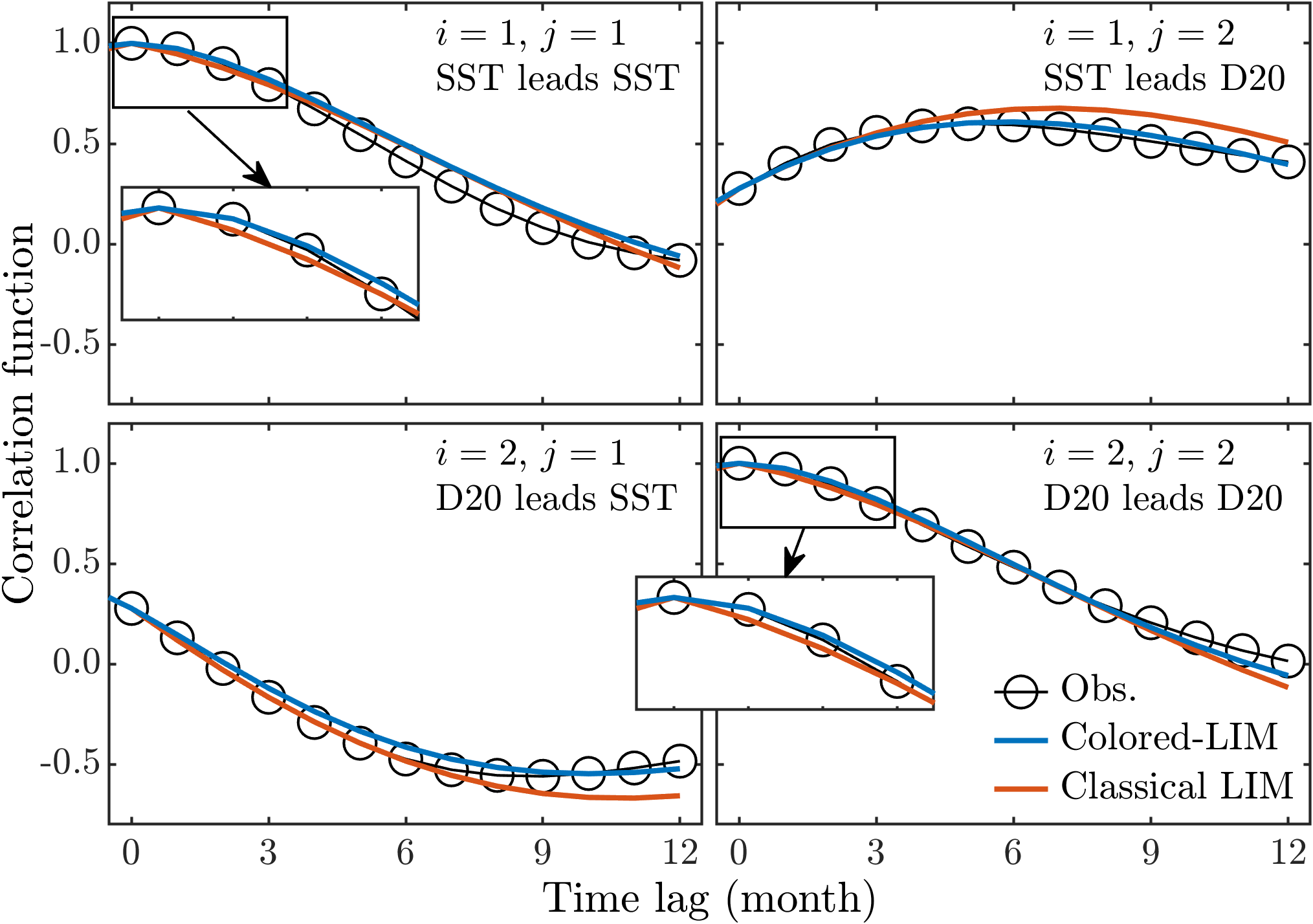}
    \caption{The $(i,j)$-entries of the observed correlation function of the normalized SST and D20 anomalies symbolized as $1$, and $2$, respectively, along with the corresponding Colored-LIM and Classical-LIM correlation functions. For clarity, the insets provide a zoomed-in view near the origin.}
    \label{Fig:CorrFunc}
\end{figure}

To quantify the performance of Colored-LIM and Classical LIM, we evaluate the ensemble prediction skill by the ensemble calibration ratio (ECR), described by
\begin{align} \label{Eq:ECR}
    \text{ECR} = \frac{\text{Forecast error}}{\text{Ensemble spread}}.
\end{align} 
ECR is a statistical measure that compares the spread of an ensemble forecast to the observed variability in the actual outcomes \cite{Perkins2020}. 
If the ECR = 1, the model is well-calibrated, meaning that the ensemble spread accurately reflects the observed variability.
If the ECR $> 1$ ($<1$), then the model is considered under-dispersive (over-dispersive), meaning that the ensemble spread is too narrow (wide) and does not fully account for the observed variability.
For a detailed implementation of ECR computation, see appendix~\ref{Chap:ECR}.

Fig.~\ref{Fig:ECR} shows that for short-term ensemble forecasting, the Colored-LIM achieves significantly better performance for both SST and D20 anomaly while the Classical LIM spreads much faster than observation, causing a conservative and over-dispersive ensemble prediction and a lower value in ECR. 
On the other hand, as the lead time goes on, the ECR for the Colored-LIM gently oscillates but stably remains close to $1$, while for the Classical LIM, the ECR is not as accurate or stable as the case of Colored-LIM.
Therefore, taking the memory of noise into account improves short-term ensemble forecasting, and the colored noise may serve as a better representation of the environmental noise.

\begin{figure}[h]
    \centering
    \includegraphics[]{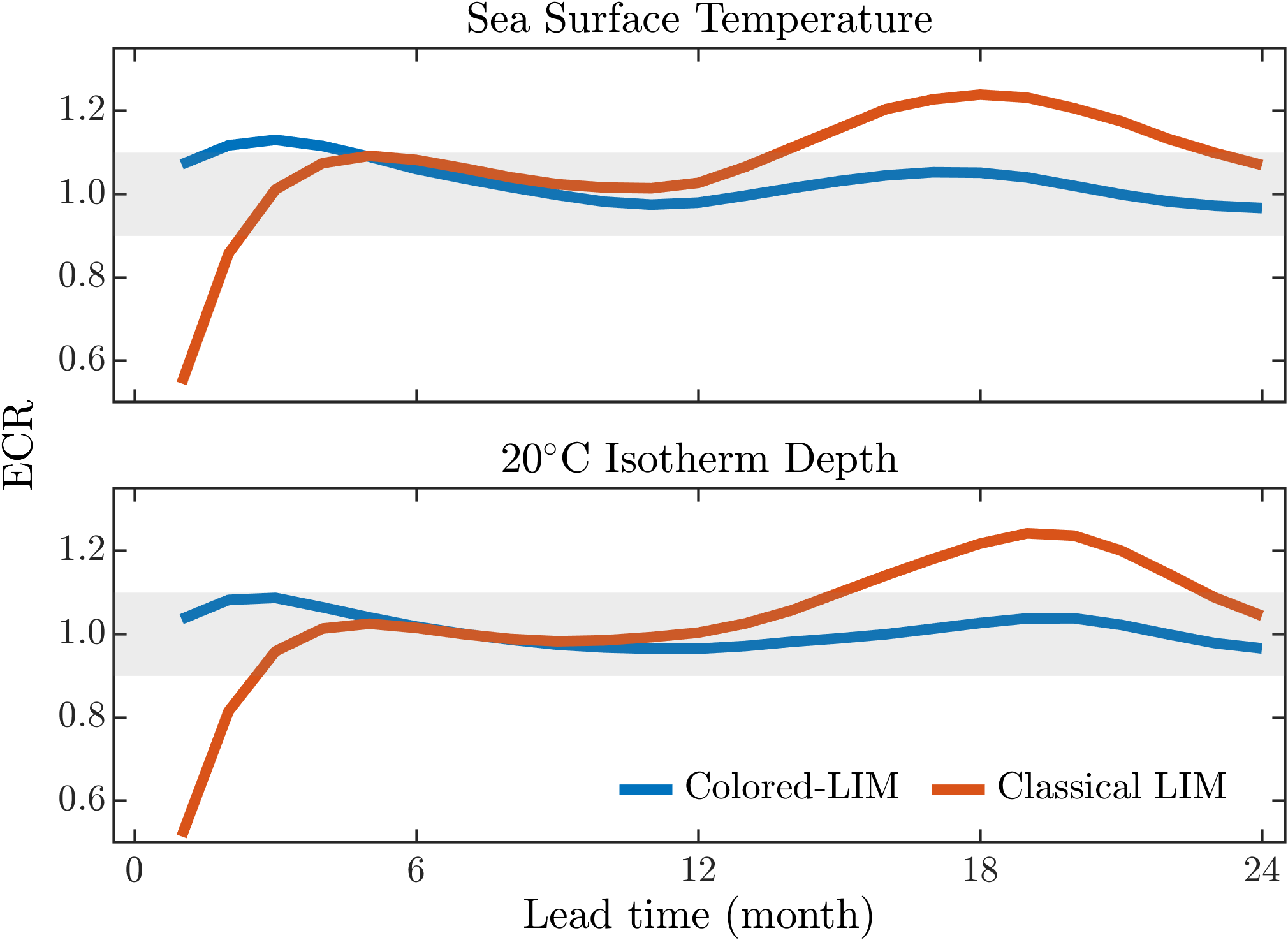}
    \caption{The ECR of SST and D20 for Colored-LIM and Classical LIM. The shaded area specifies the interval $(0.9,1.1)$, indicating a fair ensemble forecasting skill.}
    \label{Fig:ECR}
\end{figure}

\subsection{Electricity network: dynamic mode discovery}
\label{Chap:DMD}

Understanding the electricity network and the consumption pattern over a region is crucial for optimizing energy efficiency, and enhancing the grid reliability \cite{Kong2020,MOHAN2018,Sunny2020}. 
For instance, a thorough analysis helps uncover potential weaknesses within the network, allowing for proactive measures to maintain stability and prevent disruption. 
DMD offers a data-driven approach for studying such complex systems, decomposing the behavior of the electricity network into interpretable modes, and extracting coherent spatiotemporal patterns from the dataset \cite{Kutz2016,Schmid2022,Tu2014}.
As an extension of DMD, we employ the Colored-LIM to identify the key modes that capture the dominant features of the electricity pattern at Tohoku University.

We collect the time-series data on the hourly electricity consumption of $15$ clusters of buildings at Tohoku University from April 4 to July 15 (71 days), remove the hourly variation, and normalize the residuals by using z-score. 
To reduce the complexity of the data and avoid instability due to high dimensionality, we project time-series data of the normalized residuals into its $r$-dimensional PC space, where $r = 6$ is determined by the energy method such that $90\%$ of the data variance is captured.
Then, by assuming the rapidly oscillatory behavior as the white noise inference (e.g., measurement error, measurement resolution, etc.), we apply a low pass filter with a $1$-day bandpass frequency and then the Colored-LIM with a pre-selected noise correlation time $\tau = 2$ hours. 
The eigenvalues of the dynamical matrix $\Acglim$ are listed in Table~\ref{Table:TU_eig}. 
To gain insight into the daily cycle of power consumption, we select $\#2$ and $\#3$ modes whose oscillation frequency is $22.7$ hours and $e$-folding time is $10.1$ hours, corresponding to $1$-day human activity cycle and regular working hours (break time included), respectively. 
Then the dominant dynamic mode is computed by projecting the $\#2$, $\#3$ eigenvectors back to the $15$-dimensional data space.
Since these projected eigenvectors are related by complex conjugation, we denote them collectively by $\phi_\text{C-LIM}$.

\begin{table}[h]
\caption{\label{Table:TU_eig}%
The distribution of eigenvalues for the dynamical matrices $\Acglim$, $\Admd$, and $A_\text{DMD}^\prime$. 
}
\begin{ruledtabular}
\begin{tabular}{cccc}
 & \multicolumn{3}{c}{Eigenvalue (Unit: $\text{day}^{-1}$)}\\ \cmidrule(lr){2-4}
\textrm{Mode} & $\Acglim$ & $\Admd$ & $A_\text{DMD}^\prime$ \\
\midrule
$\# 1$         & $-2.7736$ & $-1.1704$ & $-54.3219$ \\
$\# 2$ & \multirow{2}{*}{ $-2.3878 \pm 1.0555 \, i$ } & \multirow{2}{*}{ $-1.0867 \pm 1.5806 \, i$ } & $-41.0924$ \\
$\# 3$ &  &  & $-15.5341$ \\
$\# 4$ & \multirow{2}{*}{ $-2.1777 \pm 0.0605 \, i$ } & \multirow{2}{*}{ $-1.0063 \pm 0.2563 \, i$ } & $-11.1664$ \\
$\# 5$ & & & $-5.5064$ \\
$\# 6$   & $-0.6645$ & $-0.4610$ & $-0.6807$ \\
\end{tabular}
\end{ruledtabular}
\end{table}

The dynamic mode has $2$ important factors: the magnitude and the angle \cite{Fang2023,Rana2023}.
The magnitude describes the derivation from the mean diurnal profile for each entry of $\phi_\text{C-LIM}$ (i.e., each cluster of buildings). 
A larger magnitude indicates that in this mode, the residuals and hence electricity consumption oscillate more intensely, so a potential consumption peak is more likely to occur in the daytime.
Fig.~\ref{Fig:TU_Analysis}a shows the magnitude of each entry, implying that the electricity consumption is less stable or predictable for building clusters No. 1, and 15. 
These clusters consist of a particularly large number of buildings, covering the primary activity areas for underclassmen and university staff, lecture rooms for advanced classes, and laboratories.
No. 2 which contains student learning areas, shows a negligible magnitude which may relate to their consistent opening hours, from 10 a.m. to 10 p.m.
No. 7, 8, 9, and 12 exhibit smaller magnitudes, which may imply a constant working pattern over time.
In fact, clusters No. 7 - 9 mainly consist of laboratories of advanced experiments, contrary to No. 1 and 15. 
Cluster No. 12 consists of only a single building, which is one of the newest buildings on the campus.
This building may use a more advanced technique to maintain a regular and stable energy consumption profile.

\begin{figure}[h]
\includegraphics{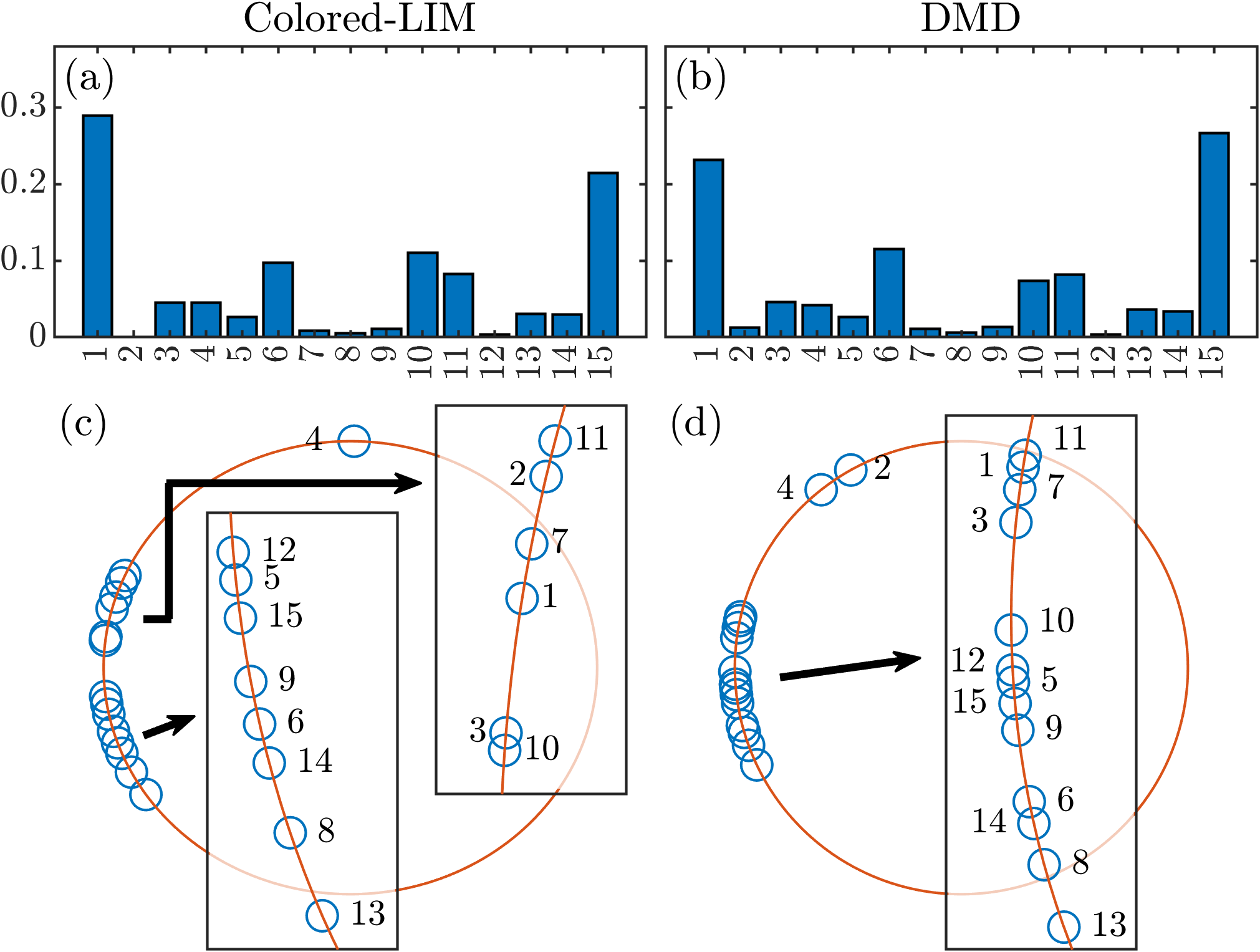}
\caption{\label{Fig:TU_Analysis} 
Magnitude and phase analysis. The upper panels, (a) and (b), show the squared absolute values for each entry in $\phi_\text{C-LIM}$ and $\phi_\text{DMD}$, respectively. The lower panels, (c) and (d), display the angle of each entry in $\phi_\text{C-LIM}$ and $\phi_\text{DMD}$ on the unit circle in the complex plane. The insets in (c) and (d) provide zoomed-in views for clarity.}
\end{figure}

On the other hand, the angle implies the phase difference between other clusters of buildings in this daily-cycle dynamic mode. 
Fig.~\ref{Fig:TU_Analysis}c shows that most clusters are located on the left-hand side of the unit circle, except cluster No. 4, which consists of buildings for student extracurricular activities and workout space.
This is probably because the electricity usage for most buildings aligns with regular working hours, but there is a distinct pattern during extracurricular activities.

In the implementation of the Colored-LIM, the noise correlation time $\tau$ is predetermined. 
In this study, the daily-cycle (i.e., $1$-day frequency) dynamic mode appears only when $\tau = 2$ hours, corresponding to the standard lecture time is $90$ minutes (no break time). 
This time scale can be understood in various ways.
For example, the cancellation of classes (as random forcing) affects student mobility and electricity usage for the next 2 hours.

To compare the Colored-LIM with the projected DMD (the Classical LIM), we follow the same workflow to obtain the eigenvalues of $\Admd$ and $A_\text{DMD}^\prime$ where the time lag is set to be $\rho = 2$ hours, with and without applying a low pass filter to the normalized residual data, respectively.
The eigenvalues are listed in Table~\ref{Table:TU_eig}.
The eigenvalues of $A_\text{DMD}^\prime$ all show purely damped behaviors, with most exhibiting highly damped behavior, which may not offer much physically meaningful interpretation.
Meanwhile, for $\Admd$, the time scale of $\#2$ and $\#3$ modes are close to that of human activities, and the corresponding dynamic mode $\phi_\text{DMD}$ exhibits similar behavior as $\phi_\text{C-LIM}$, as shown in Fig.~\ref{Fig:TU_Analysis}b.
In the phase analysis shown in Fig.~\ref{Fig:TU_Analysis}d, cluster No. 2 is also away from the other clusters, probably indicating a different consumption pattern since the student learning areas are open until late evening.

Similar implications of the Colored-LIM and DMD results do not mean that the two techniques are equivalently effective. 
Firstly, for modes $\# 2$ and $\# 3$, the damping rate and frequency of Colored-LIM result are related to human activity, providing an insightful interpretation of the electricity network. 
On the contrary, the deviation of the DMD eigenvalues from the diurnal activity pattern may imply the limitation of the white noise idealization in the traditional stochastic model. 
In addition, a further investigation shows that $\Acglim$ differs from $\Admd$ with a relative difference $\sim 35 \%$ and the modes $\# 1$, $\# 4$, and $\# 5$, though not the dominant modes in this study, exhibit a significant difference.
Therefore, it may be inferred that both Colored-LIM and DMD capture the dominant mode of the electricity consumption profile with Colored-LIM providing a more detailed dynamical characterization.

Overall, the Colored-LIM serves as an alternative data-driven technique to investigate the dynamic relevant information of a complex system.
In this case study, the use of colored noise to represent random forcing can be justified by the limited rate of information transmission, for example, through mobility, which is constrained by the time scale of events like lectures or the spatial extent of the campus area.
The dynamic mode $\phi_\text{C-LIM}$ corresponding to the daily cycle may be used to diagnose the electricity consumption pattern, deepen our understanding of the electricity network, and provide information for the energy-saving strategy in the future.


\section{Concluding Remarks}

In this article, we have proposed a novel linear inverse model called Colored-LIM to estimate the linear dynamics and the diffusive behavior from a finite realization of a complex non-linear dynamical system with temporally correlated stochasticity.
We have shown that for a given predetermined noise correction time, the linear dynamics can be estimated by solving the linear system that involves the higher-order derivatives of the correlation function of the observable. 
Then, the diffusion matrix is solved using the generalized fluctuation-dissipation relation.
As for the determination of noise correlation time, it has been formulated as a $1$-dimensional minimization problem, making the Colored-LIM numerically efficient.
In addition, we have argued that the Colored-LIM does not converge to the Classical LIM in a white noise limit, indicating that the temporal structure of environmental noise may have a significant influence on the dynamical system. 
Furthermore, the Colored-LIM has been connected with DMD by stating that it allows dependent residual terms from a regression perspective. 

We have tested Colored-LIM with linear and non-linear problems. 
It has been shown that even though a longer observation window is required whenever the noise memory is strong, the Colored-LIM returns a robust result so long as the observation data accurately reflects the statistics of the underlying process. 
Also, Colored-LIM can be thought of as learning the dynamics and stochastics from observed correlation function by a curve fitting technique.
For real-world applications, we have applied the Colored-LIM to ensemble prediction for ENSO and dynamic mode discovery for the electricity network. 
Modeling the environmental stochasticity by colored noise improves the performance and interpretability compared with the white-noise-based modeling, and the estimated stochasticity provides further insight into the complex systems.
Indeed, as an extension of Classical LIM and DMD, Colored-LIM possesses the potential to achieve a variety of tasks, making it a versatile tool.


We note that the prior knowledge can be integrated into the Colored-LIM algorithm. 
It has been shown that the noise correlation time can be predetermined in the numerical study, but indeed, in the cases where the dynamical matrix is known a prior, the Colored-LIM can be used to estimate the noise correlation time and diffusion by replacing Line 4 in Alg.~\ref{Alg:LIM-CG} with the given one.
Though it may be rarely the case, we believe that the Colored-LIM can be applied for seismology and engineering, in which the measurement equipment is a known damped oscillator \cite{Cuvalci1996,Havskov2016,Hou2019,Setareh2006,Wang2023}.

Furthermore, the Colored-LIM algorithm can simultaneously utilize various numerical differentiation methods.
In this study, implementing Alg.~\ref{Alg:LIM-CG}  with both the finite difference scheme and polynomial fitting yielded stable results, probably due to the detrending process.
However, in practice, higher-order numerical derivatives of the observed correlation function may become unstable due to the noisy component, and results can greatly vary depending on the numerical differentiation method used \cite{Chapra2018,Chartrand2011,STICKEL2010}. 
To this end, the optimization can be expanded to simultaneously minimize over both $\tau$ and all chosen numerical differentiation schemes.
Consequently, the Colored-LIM produces an accurate linear representation for the underlying complex system.

\begin{acknowledgments}
This work is supported by multiple funding sources. Justin Lien and Hiroyasu Ando are supported by the Council for Science, Technology and Innovation (CSTI), Cross-ministerial Strategic Innovation Promotion Program (SIP), the 3rd period of SIP "Smart Energy Management System" (Grant Number JPJ012207, funding agency: JST). Shoichiro Kido is supported by JSPS-Kakenhi (Grant Number: 21K13997, 23K25946, and 24H00280).
\end{acknowledgments}

\appendix

\section{Ensemble Calibration Ratio (ECR)} \label{Chap:ECR}

Let $\{ x(t) \}$ be the observed time-series data and $s$ be the forecasting lead time. 
For each time $t$, stochastic integration of LIMs with the initial condition $x(t)$ generates a set of predicted values $\{ y_m(t+s) \}$ at time $t+s$ where $m$ indexes the ensemble members.

To quantify the forecasting error at time $t+s$, the ensemble average squared error (SE) is defined by
\[ \text{SE}(t+s) = \big( x(t+s) - y(t+s) \big)^2 \]
where $y(t+s) = \langle y_m (t+s) \rangle$, with the bracket denoting the ensemble mean.
The SE measures the difference between the observed value $x(t+s)$ and the average prediction by the ensemble, thus quantifying the accuracy of the forecast.

The ensemble spread at time $t+s$ is evaluated using the ensemble variance $\sigma^2$, defined by 
\[ \sigma^2(t+s) = \langle \big( y_m(t+s) - y(t+s) \big)^2 \, \rangle. \]
The variance represents the uncertainty in the forecast, as it captures the variability among the ensemble members.
Finally, the ECR for a given lead time $s$ is computed as
\[ \text{ECR(s)} = \overline{ \left( \frac{\text{SE}(t+s)}{\sigma^2(t+s)} \right) }. \]

The ECR provides a measure of the calibration of the ensemble forecasting, comparing the forecast error (SE) to the forecast uncertainty ($\sigma^2$).
A well-calibrated ensemble will have an ECR value close to $1$, indicating that the spread of the ensemble is consistent with the observed forecast error.



\bibliography{apssamp}

\end{document}